\newtheoremstyle{mytheorem}{.5em}{.5em}%
     {\it}%         Body font
     {}%         Indent amount (empty = no indent, \parindent = para indent)
     {}% Thm head font
     {}%        Punctuation after thm head
     {.5em}%     Space after thm head (\newline = linebreak)
     {#2 \thmname{ \bf{#1}.} \thmnote{\it{#3}.}}%         Thm head spec
\theoremstyle{mytheorem}
\newtheorem{theorem}{Theorem}[section]
\newtheorem{lemma}[theorem]{Lemma}
\newtheorem{corollary}[theorem]{Corollary}
\newtheoremstyle{note}{.5em}{.5em}%
     {}%         Body font
     {}%         Indent amount (empty = no indent, \parindent = para indent)
     {}% Thm head font
     {}%        Punctuation after thm head
     {.5em}%     Space after thm head (\newline = linebreak)
     {#2 \thmname{ \bf{#1}.} \thmnote{\it{#3}.}}%         Thm head spec
\theoremstyle{note}
\newtheorem{ntt}[theorem]{}
\newtheorem{example}[theorem]{Example}
\newtheorem{remark}[theorem]{Remark}
\newtheorem{definition}[theorem]{Definition}
\newtheorem{assumption}[theorem]{Assumption}
\newcommand{\calJ}{\mathcal J}
\newcommand{\CH}{\operatorname{CH}}
\newcommand{\La}{\Lambda}           % Lambda
\newcommand{\la}{\lambda}            % lambda
\newcommand{\om}{\omega}          % omega
\newcommand{\Spec }{\operatorname{Spec}}
\newcommand{\IF}{\mathcal{I}_F}    % the augmentation ideal for F
\newcommand{\ttt}{\mathtt t}     % torsion index
\newcommand{\hh}{\mathtt{h}}       % oriented cohomology
\newcommand{\ep}{\epsilon}
\newcommand{\Th}{\Theta}
\newcommand{\al}{\alpha}
\newcommand{\bbR}{\mathbb R}
\newcommand{\calI}{\mathcal I}
\newcommand{\calL}{\mathcal L}
\newcommand{\Z}{\mathbb Z}
\title{on the $\gamma$-Filtration of Oriented Cohomology of Complete Spin-flags}
\date{\today}
\author{Changlong Zhong}
\address{Changlong Zhong, Department of Mathematics and Statistics,
University of Ottawa}
\email{zhongusc@gmail.com}
\begin{document}
\maketitle

\begin{abstract}We study the characteristic map of algebraic oriented cohomology of complete spin-flags and the ideal of invariants of formal group algebra. As an application, we provide an annihilator of the torsion part of the $\gamma$-filtration. Moreover,  if the formal group law determined by the oriented cohomology theory is congruent to the additive formal group law modulo 2, then at degree 2 and 3, the $\gamma$-filtration of complete spin-flags is torsion free. 
\end{abstract}

\section{Introduction}Oriented cohomology theories \cite{LM} of algebraic varieties over base field $k$ are cohomology theories generalized from the Chow group $\CH$ and the Grothendieck group $K_0$. They are algebraic analogue of  cohomology theories of complex manifolds. In particular, each  oriented cohomology theory $\hh$  determines a one-dimensional formal group law $F$ over the coefficient ring $R=\hh(\Spec k)$. For example, $\CH$ (resp. $K_0$) determines the additive formal group law $F_a$ (resp. the multiplicative formal group law $F_m$).

Given a  split simple simply connected linear algebraic group $G$ with  the variety of complete flags $X$ and a fixed maximal torus $T$, let $W$ be its Weyl group and $\La$ be the weight lattice with respect to $T$. For arbitrary oriented cohomology $\hh$ and corresponding formal group law $F$,  Calm\`es-Petrov-Zainoulline \cite{CPZ} construct a formal group algebra $R[[\La]]_F$ and a characteristic map $c_F: R[[\La]]_F\to \hh(X)$. These constructions generalize those of Demazure  for  the Chow group \cite{Dem73} and for the Grothendieck group \cite{Dem74}. They provide algebraic tools to study oriented cohomology of homogeneous varieties. For instance, the $\gamma$-filtration  of $\hh(X)$ is defined using $c_F$, and the associated quotients $\gamma^{(d)}\hh(X)$ are studied in \cite{MZZ}. More precisely, it shows in \textit{loc.it.} that  $\gamma^{(d)}\hh(X)$ is torsion free, provided that the torsion index $\ttt$ of $G$ is  invertible in $R$. This does not include the case when $2$ is not invertible in $R$ and $G$ is of type $B_n$ and $D_n$. The goal of this paper is to study this case. More precisely, our main result is

\begin{theorem} \label{thm:main}
  Let $G$ be split, simple simply connected  of type  $B_n$ with $n\ge 3$ or of type $D_n$ with $n\ge 4$, and let  $X$ be its variety of complete flags. Let $\hh$ be a weakly birationally invariant oriented cohomology theory with coefficients in $R$ satisfying Assumption \ref{assump}. Suppose that $2$ is regular in $R$ but $\frac12\not\in R$.  Let $F$ be the  corresponding formal group law over $R$, and let $d\ge 2$.
  \begin{enumerate}
  \item[(i)] If $R$ has characteristic zero, then the torsion part of $\gamma^{(d)}\hh(X)$ is annihilated by
$\zeta_d^2\eta_d^2$, where the integers $\zeta_d$ and $\eta_d$ are defined in \ref{def:number}.
  \item[(ii)] Let   $d=2$ or $3$ and $F\equiv F_a\mod 2$. Then $\gamma^{(d)}\hh(X)\cong \gamma^{(d)}\CH(X;R)$. In particular, if $R$ has characteristic zero, then $\gamma^{(d)}\hh(X)$ is torsion free.
  \end{enumerate}

\end{theorem}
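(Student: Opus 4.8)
The plan is to compare the $\gamma$-filtration of $\hh(X)$ with that of $\CH(X;R)$ through their characteristic maps and to measure the difference by powers of the torsion index of $G$, which for type $B_n$ and $D_n$ is a power of $2$. Recall that $\gamma^d\hh(X)$ is built from the image under $c_F$ of the $d$-th power $\IF^d$ of the augmentation ideal of $R[[\La]]_F$, and $\gamma^d\CH(X;R)$ from $c_{F_a}(\IA^d)$, where $\IA$ is the augmentation ideal of $R[[\La]]_{F_a}$. Both of these formal group algebras carry the augmentation-ideal filtration with the same associated graded, the polynomial algebra $\mathrm{Sym}_R(\La)$, and are isomorphic as filtered $R$-modules identically on the associated graded; this isomorphism is, however, neither multiplicative nor $W$-equivariant, the obstruction being the coordinate change carrying $F$ to $F_a$, which (since $\tfrac12\notin R$) is a $2$-primary phenomenon. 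The standing hypotheses --- weak birational invariance and Assumption \ref{assump} --- are what make $\hh(X)$ a free $R$-module, give the description of $\hh(G/B)$ and its characteristic map from the data $(R,F,\La,W)$ in the style of \cite{CPZ}, and make all of this compatible with reduction modulo $2$; they are used throughout.

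For part (i), the essential input is the analysis of the ideal of $W$-invariants $\IF^W = \IF\cap(R[[\La]]_F)^W$: for each relevant degree one exhibits explicit invariants --- arising from Chern classes of the standard orthogonal representation and of the (half-)spin representation of $G$ --- whose leading terms in $\mathrm{Sym}_R(\La)$ are the classical generators of the positive-degree $W$-invariants multiplied by an explicit power of $2$. Knowing $\ker c_F$ (equivalently $\IF^W$) to this precision, one transports $\gamma^k\CH(X;R)$ along the filtered isomorphism above and sandwiches $\gamma^k\hh(X)$ between two $R$-submodules whose indices are products of two explicit $2$-powers: the factor $\zeta_d$ absorbing the non-surjectivity of $c_F$ --- the gap between $c_F(\IF^k)$ and the actual $k$-th filtration step of $\hh(X)$ --- and $\eta_d$ absorbing the deviation of $F$ from $F_a$. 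Because the quotient $\gamma^{(d)}\hh(X) = \gamma^d\hh(X)/\gamma^{d+1}\hh(X)$ sees this sandwich at both the level $k=d$ and the level $k=d+1$, each of the two corrections intervenes twice, and in characteristic zero --- where $\gamma^{(d)}\hh(X)$ is a quotient of two free $R$-modules, so its torsion is exactly the failure of $\gamma^{d+1}\hh(X)$ to be saturated in $\gamma^d\hh(X)$ --- this torsion is annihilated by $\zeta_d^2\eta_d^2$. The main obstacle is the determination of $\zeta_d$ and $\eta_d$, that is, the bookkeeping of $2$-adic valuations inside the explicit $W$-invariants of $B_n$ and $D_n$ and the verification that no further loss occurs; this is precisely the "ideal of invariants of formal group algebra" computation promised in the abstract.

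For part (ii), the congruence $F\equiv F_a\bmod 2$ makes the coordinate change between $F$ and $F_a$ the identity modulo $2$, so $R[[\La]]_F$ and $R[[\La]]_{F_a}$ coincide as $W$-algebras after reduction modulo $2$, compatibly with the Bott--Samelson structure constants; combined with the structural inputs this identifies $\hh(X)/2$ with $\CH(X;R)/2$ compatibly with the characteristic maps, and hence $\gamma^{(d)}\hh(X)/2$ with $\gamma^{(d)}\CH(X;R)/2$ at every level. Since $2$ is regular in $R$, each graded piece $M$ on either side sits in a short exact sequence $0\to M\xrightarrow{\,2\,}M\to M/2\to 0$, and for $d=2,3$ one upgrades the reduction modulo $2$ to an honest isomorphism $\gamma^{(d)}\hh(X)\cong\gamma^{(d)}\CH(X;R)$: the presentation of $\gamma^{(d)}\hh(X)$ coincides with that of $\gamma^{(d)}\CH(X;R)$ up to relations divisible by $2$, and in degrees $2$ and $3$ those extra relations vanish because the invariants that intervene there have leading terms modulo $2$ determining the whole graded piece. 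For $d\ge 4$ this fails --- the higher-degree $B_n$, $D_n$ invariants carry coefficients of positive $2$-adic valuation, so the congruence modulo $2$ no longer suffices --- which is why the statement is restricted to $d\le 3$. Finally, in characteristic zero $\gamma^{(2)}\CH(X;R)$ and $\gamma^{(3)}\CH(X;R)$ are torsion free, by the known behaviour of the low-degree $\gamma$-filtration of the Chow ring of a complete flag variety, so $\gamma^{(2)}\hh(X)$ and $\gamma^{(3)}\hh(X)$ are torsion free as well.
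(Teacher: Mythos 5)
Your overall strategy agrees with the paper's in spirit: compare $\hh$ with $\CH$ by means of the deformation isomorphism between formal group algebras, and quantify the discrepancies at the level of the $W$-invariant ideal $\IF^W$ and the kernel of the characteristic map by explicit powers of $2$. However, two specific points in your write-up are wrong or would not survive being made precise.

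First, your explanation of why the annihilator is $\zeta_d^2\eta_d^2$ (rather than $\zeta_d\eta_d$) is incorrect. You attribute the squaring to the fact that $\gamma^{(d)}\hh(X)=\gamma^{d}\hh(X)/\gamma^{d+1}\hh(X)$ "sees the sandwich at both $k=d$ and $k=d+1$," i.e.\ to two filtration degrees being involved. In the actual argument both factors arise at the \emph{same} graded level $d$, for different reasons: one factor $\zeta_d\eta_d$ is needed to define the comparison map $\gamma^{(d)}\hh(X)\to\gamma^{(d)}\CH(X;R)$ at all (Lemma \ref{lemma:main}: $\eta_d$ moves $\ker c_F^{(d)}$ into $(\IF^W)^{(d)}$ by Lemma \ref{lemma:kerinvgen}, and $\zeta_d$ then moves it through $\Phi_d^{F\to F_a}$ into $(\calI_a^W)^{(d)}$ by Remark \ref{rem:invariant}); the second factor $\zeta_d\eta_d$ is needed to lift membership in $\ker c_a^{(d)}$ back through the deformation map to membership in $(\IF^W)^{(d)}\subseteq\ker c_F^{(d)}$. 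Relatedly, your characterization of what $\zeta_d$ measures ("the non-surjectivity of $c_F$") is off: $c_F^{(d)}\colon\IF^{(d)}\to\gamma^{(d)}\hh(X)$ is surjective by construction of the $\gamma$-filtration. What $\zeta_d$ controls is the index of the subgroup generated by the $\Th(\al)$'s inside $(\IF^W)^{(d)}$ and the failure of $\Phi_d$ to carry $(\IF^W)^{(d)}$ into $(\calI_{F'}^W)^{(d)}$, while $\eta_d$ controls the index of $(\IF^W)^{(d)}$ in $\ker c_F^{(d)}$.

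Second, your argument for part (ii) via reduction modulo $2$ is not a proof as stated. You claim $\hh(X)/2\cong\CH(X;R)/2$ and then assert that for $d=2,3$ this "upgrades" to an isomorphism $\gamma^{(d)}\hh(X)\cong\gamma^{(d)}\CH(X;R)$ because "the invariants that intervene there have leading terms modulo $2$ determining the whole graded piece." A mod-$2$ isomorphism of two $R$-modules with $2$ regular does not give an isomorphism of the modules, and the step quoted has no real content. What is needed, and what the paper actually does (Lemma \ref{lemma:kerinvspec}), is a direct degree-by-degree computation in $\IF^{(2)}$ and $\IF^{(3)}$ showing that when $F$ is even one has equalities $\ker c_F^{(d)}=(\IF^W)^{(d)}$, combined with the observation (Remark \ref{rem:invariant}) that for two even formal group laws the deformation map carries $(\IF^W)^{(d)}$ onto $(\calI_{F'}^W)^{(d)}$. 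It is the coincidence of $\ker c_F^{(d)}$ with $\ker c_a^{(d)}$ under the deformation isomorphism that yields $\gamma^{(d)}\hh(X)\cong\gamma^{(d)}\CH(X;R)$. Without those two explicit inputs the "upgrade" step in your proposal is a genuine gap.
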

Note that the annihilator we obtained depends only on the filtration degree $d$. It does not depend  on the rank of $G$, nor on the specific cohomology theory $\hh$. The cohomology theories satisfying the hypothesis of Theorem \ref{thm:main} include any oriented cohomology theory over a field $k$ of characteristic zero (see \ref{remark:assumption}) such that  $2$ is regular but not invertible in $\hh(k)$, e.g., the algebraic cobordism. 

 For $d=1$, following the argument in \cite[Corollary 8.8]{MZZ}, it is easy to see that $\gamma^{(1)}\hh(X)\cong \gamma^{(1)}\CH(X)$, so it is always torsion free (when the characteristic of $R$ is zero). That's the reason why we restrict that $d\ge 2$. On the other hand, similar result was proved in   \cite[Theorem 6.1]{BZZ} for Chow group of twisted flag varieties of type $B_n$ and $D_n$. Note that for $\hh=K_0$ and $R=\Z$, the formal group law is $F(x,y)=x+y-xy$, so it does not satisfy the hypothesis of Theorem \ref{thm:main}.(ii). Therefore, our result does not contradict  \cite[Theorem 3.1]{GZ}, which  says that the torsion part of $\gamma^{(2)}K_0(X)$ is $\Z/2$. Corollary \ref{cor:K} provides more precise application to the Grothendieck group.

To prove Theorem \ref{thm:main}, we study $\ker c_F$ and the ideal $\IF^W$ of $R[[\La]]_F$ generated by nonconstant $W$-invariants.  The ideal $\IF^W$ itself has classical meanings. For example, $\calI_{F_a}^W$ is generated by the basic polynomials invariants \cite{Hum}, and  a theorem of Chevalley says that  $\calI_{F_m}^W$ is generated by the fundamental representations of $G$. On the other hand,  $\IF^W\subset \ker c_F$, and they coincide when the torsion index of $G$ is invertible in $R$.
We study the generators of $\IF^W$ and the  index of the embedding of $\IF^W$ in $\ker c_F$. We then use the deformation map \cite{MZZ} between formal group algebras of two distinct $F$ and $F'$ to define a map between $\gamma$-filtrations of corresponding oriented cohomologies $\hh$ and $\hh'$. Such map enables us to compare arbitrary $\hh$ with $\CH$.

This paper is organized as follows: In Section 2 we recall the definition of the formal group algebra $R[[\La]]_F$ and  the deformation map. In Section 3 we recall the definition of characteristic map and   $\gamma$-filtration. In Section 4 we study the generators of $\IF^W$. In Section 5 we provide an upper bound of the index of the embedding of $\IF^W$ in $\ker c_F$. In Section 6 we use the deformation map and the results in Sections 4 and 5 to define a map between  the $\gamma$-filtrations of different oriented cohomologies, and prove Theorem \ref{thm:main}.

\medskip

Through this paper, we adopt:

\noindent
\begin{itemize}
\item $R$ is a commutative ring with  identity such that $2$ is regular but not invertible.
\item  $G$ is a split simple simply connected linear algebraic group of classical Dynkin type $B_n$ with $n\ge 3$ or type $D_n$ with $n\ge 4$.
\item $\ttt$ is the torsion index of $G$, which is a power of $2$ in this case (\cite{Dem73}, \cite{Tot}).
\item $X$ is the variety of complete flags of $G$.
\item  $W$ is the Weyl group of $G$.
\item   $\La$ is  the group of characters of a maximal torus of $G$, which corresponds to the weight lattice of $G$.
\item $\{\om_1,...,\om_n\}$ is the set of fundamental weights, which is a basis of $\La$.
\item $\Sigma$ is the set of roots with a fixed set of simple roots $\Pi=\{\al_1,...,\al_n\}$.
\end{itemize}

%%%%%%%%%%%%%%%%%%%%%%%%%%%%%

\section{The formal group  algebra and the deformation map}
In this section we recall the definition of formal group algebra in \cite{CPZ} and  the deformation map in \cite{MZZ}.
Recall that a one-dimensional commutative formal group law $F$ over $R$ is a power series
$$F(x,y)=x+y+\sum_{i,j\ge 1}a_{ij}x^iy^j $$
with $a_{ij}\in R$ 
satisfying the following conditions:
$$F(x,F(y,z))=F(F(x,y),z), ~ F(x,y)=F(y,x),~ F(x,0)=x.$$
We use the notations $x+_Fy=F(x,y)$, $2\cdot_F x=F(x,x)$ and $ 3\cdot_F x=F(x,2\cdot x)$, etc.

\begin{example}\label{ex:fgl}\begin{itemize}
\item[(1)]  The \textit{addivitive formal group law} $F_a$ is defined by  $F_a(x,y)=x+y$.

\item[(2)]  The \textit{multiplicative formal group law} $F_m$ is defined by $F_m(x,y)=x+y-axy$ with $a\in R^\times$.

\item[(3)] The \textit{Lorentz formal group law} is defined by 
$$F_l(x,y)=\frac{x+y}{1+\beta xy}=(x+y)\sum_{i=0}(-\beta xy)^i, \quad \beta\neq 0\in R.$$

\item[(4)] \cite[\S IV.1]{Sil} Let $E$ be the elliptic curve defined by 
$$y=x^3+a_1xy+a_2x^2y+a_3y^2+a_4xy^2+a_6y^3,$$
then the \textit{elliptic formal group law} over $R=\Z[a_1,a_2,a_3,a_4,a_6]$ is defined by 
$$F_e(x,y)=x+y-a_1xy-a_2(x^2y+xy^2)-2a_3(x^3y+xy^3)+(a_1a_2-3a_3)x^2y^2+\ldots .$$
\end{itemize}
\end{example}

\begin{definition}  Let $F$ be a formal group law over $R$. Consider the polynomial ring $R[x_{\La}]$ in the  variables $x_\la$ with $\la\in \La$. Let $$\ep:R[x_{\La}]\to R, ~ x_{\la}\mapsto 0$$ be the augmentation map, and let $R[[x_{\La}]]$ be the ($\ker\ep$)-adic completion of $R[x_{\La}]$. Let $\calJ_F$ be the closure of the ideal of $R[[x_\La]]$ generated by $x_0$ and elements of the form $x_{\la_1+\la_2}-F(x_{\la_1}, x_{\la_2})$ for all $\la_1, \la_2\in \La$. Here $x_0\in R[x_{\La}]$ is the element determined by the zero element of $\La$. The \textit{formal group algebra } $R[[\La]]_F$ is defined to be the quotient
$$R[[\La]]_F=R[[x_{\La}]]/\calJ_F.$$
The augmentation map  induces a ring homomorphism $ \ep: R[[\La]]_F\to R$ with kernel  $\IF$. Then we have a filtration of  $R[[\La]]_F$:
$$R[[\La]]_F=\IF^0\supseteq \IF^1\supseteq \IF^2\supseteq \cdots$$
and the associated graded ring
$$Gr_R(\La, F)\overset{def}=\bigoplus_{i=0}^{\infty}\IF^i/\IF^{i+1}.$$
\end{definition}
\begin{example} \label{ex:subquotient} By \cite[Lemma 4.2]{CPZ}, $Gr_R(\La, F)$ is isomorphic to the symmetric algebra $S_R^*(\La)$. The isomorphism maps $\prod x_{\la_i}$ to $\prod \la_i$. Indeed, $R[[\La]]_F$ is non-canonically isomorphic to $R[[x_{\om_1},...,x_{\om_n}]]$. 
\end{example}

The action of the  Weyl group $W$ on $\La$ induces a $W$-action on $R[[\La]]_F$.
 Let  $\IF^W$ be the ideal of $R[[\La]]_F$ generated by the subset of constant-free $W$-invariants $R[[\La]]_F^W\cap \IF$. For $d\ge 0$, let 
 \begin{eqnarray*}\IF^{(d)}& = & \IF^d/\IF^{d+1},\\
 (R[[\La]]_F^W)^{(d)}&=& (R[[\La]]_F^W\cap \IF^d)/(R[[\La]]_F^W\cap \IF^{d+1}),\\
~(\IF^W)^{(d)}&=&(\IF^W\cap \IF^d)/(\IF^W\cap \IF^{d+1}).
 \end{eqnarray*}
 Then $\IF^{(d)}$ is a free $R$-module generated by $x_{\om_1}^{m_1}\cdot ...\cdot x_{\om_n}^{m_n}$ with $\sum_{i=1}^n m_i=d$.

\begin{ntt}
For any two formal group laws $F$ and $F'$ over $R$, there is an $R$-algebra isomorphism,  called the \textit{deformation map} from $F$ to $F'$
$$\Phi^{F\to F'}: R[[\La]]_F\to R[[\La]]_{F'}$$
defined as follows: firstly, one defines $\Phi^{F\to F'}(x_{\om_i})=x_{\om_i}\in R[[\La]]_{F'}$. For any ${\la}=\sum_{i=1}^nm_i\om_i\in\La$, we have $x_\la=x_{\sum m_i\om_i}\in R[[\La]]_F$. Then we define 
$$\Phi^{F\to F'}(x_\la)=(m_1\cdot_{F} x_{\om_1})+_{F}\cdots+_{F}(m_n\cdot _{F}x_{\om_n})\in R[[\La]]_{F'}.$$
Clearly $\Phi^{F'\to F}\circ \Phi^{F\to F'}=id_{R[[\La]]_F}$, so it is an isomorphism.
It maps $\IF^d$ into $\calI_{F'}^d$, hence induces an isomorphism of $R$-modules
$$\Phi_d^{F\to F'}: \IF^{(d)} \to \calI_{F'}^{(d)}.$$
A key property  is that for any $\Pi_{i=1}^dx_{\la_i}\in \IF^{(d)}$, we have 
\begin{equation}\label{eq:keyproperty}
\Phi_d^{F\to F'}(\Pi_{i=1}^dx_{\la_i})=\Pi_{i=1}^dx_{\la_i}\in \calI_{F'}^{(d)},
\end{equation} 
so $\Phi_d^{F\to F'}$ is $W$-equivariant. We then  have
$$\Phi_d^{F\to F'}:(\IF^{(d)})^W\overset{\cong}\longrightarrow
(\calI_{F'}^{(d)})^W,$$ but in general $ \Phi_d^{F\to F'}((\IF^W)^{(d)})$ is not contained in $ (\calI_{F'}^W)^{(d)}.$ One of the main interests of \cite[\S8]{MZZ} and Section 4 of this paper is the difference between  $\IF^W$ and $\mathcal{I}_{F'}^W$, i.e., to determine the smallest integer $\tau^{F\to F'}_d$ such that $$\tau^{F\to F'}_d\cdot (\calI_{F'}^W)^{(d)}\subset \Phi_d^{F\to F'}((\IF^W)^{(d)}).$$

If $R$ is a UFD, such integer exists and is called the $d$-th exponent of the $W$-action from $F$ to $F'$. In particular, by \cite{MZZ}, $\tau_d^{F_m\to F_a}$ coincides with the exponent $\tau_d$ defined in \cite{BNZ}, so $\tau_d^{F_m\to F_a}|2$ if $G$ is of type $B_n$ (resp. $D_n$) and $d\le 2n-1$ (resp. $d\le 2n-3$) by \cite{BZZ}. 
\end{ntt}

%%%%%%%%%%%%%%%%%%%%%%%%%%%%
%%%%%%%%%%%%%%%%%%%%%%%%%%%%
\section{The $\gamma$-filtration of oriented cohomology theory}

In this section we recall the definition of characteristic map and the $\gamma$-filtration of oriented cohomology theory of variety of complete flags \cite{MZZ}.

\begin{ntt} An algebraic oriented cohomology theory $\hh$( in the sense of Levine--Morel)  is a contravariant functor from the category of smooth projective varieties over a field $k$ to the category of commutative  (graded) $R$-algebras such that $\hh(\Spec k)=R$. It is  characterized by the axioms in \cite[\S1.1]{LM}. For instance, there exists  push-forward for projective morphism,   and  the projective bundle property and the extended homotopy property hold.

A cohomology theory is \textit{birationally invariant} \cite[Definition 8.7]{CPZ} if for any proper birational morphism $f:X\to Y$ of  smooth projective varieties, the push-forward of the fundamental class $f_*(1_{X})$ is $1_Y\in \hh(Y)$, and is \textit{weakly birationally invariant} if  $f_*(1_X)$  is invertible in $\hh(Y)$. The Chow ring $\CH$ over arbitrary base field is birationally invariant, and by \cite[Theorem 4.3.9]{LM}, the connective K-theory defined over a field of characteristic 0 is universal among all birationally invariant theories. Moreover, if the base field has characteristic 0, all oriented cohomology theories in the sense of Levine--Morel are weakly birationally invariant \cite[Corollary 8.10]{CPZ}.

 Each oriented cohomology theory determines characteristic classes, that is, a collection of maps
$$c_i^\hh:K_0(X)\to \hh(X),~ i\ge 1$$
characterized by  properties \cite[Definition 1.1.2]{LM}. In particular,
for any two line bundles $\calL_1$ and $\calL_2$ over $X$ one has
$$c_1^\hh(\calL_1\otimes \calL_2)=F(\calL_1,\calL_2)\in \hh(X) $$
for some formal group law $F$ over $R$.
This defines a map from the set of oriented cohomology theories to the set of one-dimensional commutative formal group laws.  For example, $F_a$ corresponds to the Chow group $\CH$ and $F_m$ corresponds to the Grothendieck group $K_0$.
\end{ntt}

\begin{ntt}
From now on, let $X$ be the variety of complete flags, and fix a Borel subgroup $B$ of $G$. If $G$ is of type $B_n$ ($n\ge 3$) or of type $D_n$ ($n\ge 4$), then the torsion index $\ttt$   is a power of $2$ (see \cite{Dem73} for definition and \cite{Tot} for computations).

Let $F$ be the formal group law corresponding to the oriented cohomology $\hh$, then there is a characteristic map, which is an $R$-algebra homomorphism
$$c_F:R[[\La]]_F\to \hh(X)$$
defined by $c_F(x_\la)=c_1^\hh(\calL(\la)).$ Here $\calL(\la)$ is the line bundle over $X$ corresponding to the character $\la$.

\begin{definition} \cite[p.9]{MZZ}
The $\gamma$-filtration of $\hh(X)$ is defined as follows:
  $\gamma^d\hh(X)$ is defined to be the  $R$-submodule of $\hh(X)$ generated by 
  $$ c_1^\hh(\calL_1)\cdot ...\cdot c_1^\hh(\calL_m)$$
  with $m\ge d$ and $ \calL_1,...,\calL_m$   line bundles over $ X.$ 
 Define $$\gamma^{(d)}\hh(X)=\gamma^d \hh(X)/\gamma^{d+1} \hh(X).$$
\end{definition}

By definition,  $c_F$ induces  maps
$$c_F:\IF^d\twoheadrightarrow \gamma^d\hh(X)~\text{ and }~ c_F^{(d)}: \IF^{(d)} \twoheadrightarrow \gamma^{(d)}\hh(X).$$

The Bruhat decomposition gives $X=\sqcup_{w\in W}BwB/B$, i.e., $X$ is a disjoint union of affine spaces. The closure of $BwB/B$ is denoted by $X_w$ and is called a Schubert variety. 
For any simple root $\al_i$, let $P_i$ be the minimal parabolic subgroup corresponding to $\al_i$. For any $w\in W$ and $I_w=(i_1,...,i_r)$ a reduced decomposition of $w$, the Bott--Samelson variety is defined as:
$$X_{I_w}:=P_{i_1}\times^B\cdots\times^B P_{i_r}.$$
The multiplication map induces $q_{I_w}:X_{I_w}/B\to X$ which factors through $X_w$:  $$q_{I_w}:X_{I_w}/B\to X_w\to X$$
 where the first map is surjective and birational, and the second one is a closed embedding. Denote $\zeta_{I_w}:=(q_{I_w})_*(1)\in \hh(X)$. 

\end{ntt}

\begin{assumption}\label{assump}\cite[Assumption 13.2]{CPZ} For each $w\in W$, let $I_w$ be a reduced decomposition of $w$. The set $\{\zeta_{I_w}\}_{w\in W}$ forms a $R$-basis of $\hh(X)$.
\end{assumption}

\begin{ntt}\label{remark:assumption}
For example, according to \cite[Lemma 13.3]{CPZ}, $\CH$ and $K_0$ in $\Z$ or $\Z/m$ coefficients over arbitrary base field satisfy this assumption, and so does any oriented cohomology theory over a field of characteristic zero. 
If, in addition,  $\hh$ is weakly birationally invariant and $\ttt$ is regular in $R$, then by \cite[Theorem 13.12]{CPZ}, $c_F$ coincides with the characteristic map defined in \cite[\S6]{CPZ} (one can view the latter map as the algebraic replacement of $c_F$). In this case,  $\IF^W\subset\ker c_F$. Furthermore, if the torsion index $\ttt$ is invertible in $R$, then  $c_F$ is surjective with $\ker c_F=\IF^W$ \cite[Theorem 6.9]{CPZ}.

Throughout this paper, we always assume that $\hh$ is weakly birationally invariant and satisfies Assumption \ref{assump}, for example, $\hh$ can be any oriented cohomology theory over $k$ with characteristic zero.
\end{ntt}

%%%%%%%%%%%%%%%%%%%%%%%%
\section{The invariants}
In this section, we study the generators of $\IF^W$, and  prove Lemma \ref{lemma:key1} and \ref{lemma:invariant} concerning the invariants $\Th_d$. The ``only if'' parts of the two lemmas are proved in Lemmas 8.3, 8.4,  8.5 of \cite{MZZ}.

First, we prove some property of formal group law. Let
$$F(x,y)=x+y+\sum_{m=1}^\infty a_{mm}x^my^m+\sum_{l=3}\sum_{j+k=l, j< k}a_{jk}(x^jy^k+x^ky^j).$$
We use $\imath_F(x)\in R[[\La]]_F$ to denote the (formal) inverse of $x\in R[[\La]]_F$, and  $O(s)$ to denote a sum of terms of degree $\ge s$.
\begin{definition}\label{def:even} We say that a formal group law $F$ is \textit{even} if $$F(x,y)\equiv x+y\mod 2.$$

\end{definition}
\begin{example} 
\begin{itemize}
\item[(1)] If $\frac{\beta}{2}\in R$, then the Lorentz formal group law  $F_l(x,y)=\frac{x+y}{1+\beta xy}$ is even. 

\item[(2)]
If all the elements $a_1,a_2,a_3,a_4$ and $a_6$ in Example \ref{ex:fgl}.(4) are even integers, then the elliptic formal group law  $F_e$ is even. This follows from the fact that all the coefficients of $F_e(x,y)$ (except for those of $x$ and $y$) are combinations of $a_i$, $i=1,2,3,4,6$.
\end{itemize}
\end{example}

\begin{lemma} \label{lemma:inverse}
If the formal group law $F$ satisfies that $2|a_{mm}$ for $1\le m < s$, then
\[
  \imath_F(x)
\equiv
  x +  a_{ss} x^{2s} + O(2s+1)
\mod 2.
\]
Consequently, if $2|a_{ss}$  for all $s$, then
$\imath_F (x) \equiv x \mod 2$.

\end{lemma}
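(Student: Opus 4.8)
The plan is to prove Lemma~\ref{lemma:inverse} by a direct induction on the degree, working throughout modulo $2$. Recall that $\imath_F(x)$ is characterized by $F(x,\imath_F(x))=0$ in $R[[\La]]_F$, i.e. $x +_F \imath_F(x) = 0$. Write $\imath_F(x) = \sum_{k\ge 1} c_k x^k$ with $c_1 = -1$ (so $c_1 \equiv 1 \bmod 2$), and substitute into $F(x, \imath_F(x)) = 0$. Comparing coefficients of $x^j$ for increasing $j$ determines the $c_k$ recursively: the coefficient of $x^j$ in $F(x,y)$ with $y = \sum c_k x^k$ picks up the linear terms $x + y$, contributing $c_j$ (plus the $j=1$ term from $x$ itself), together with a polynomial in $a_{ij}$ and the lower $c_k$'s coming from the $\sum a_{ij} x^i y^j$ part.

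First I would record the base of the induction: since $F(x,y) \equiv x+y \bmod 2$ through degree $2s-1$ in the relevant sense — more precisely $2 \mid a_{mm}$ for $m < s$ and the mixed coefficients $a_{jk}$ with $j<k$, $j+k < 2s$ are automatically handled because the lowest mixed term $x^j y^k$ with $j \ge 1$, $k \ge 2$ contributes to degree $j+k \ge 3$ but always pairs with a factor of $y = \imath_F(x)$ of degree $\ge k \ge 2$, pushing its contribution to degree $\ge j + k \ge$ ... — here I need to be careful. The cleanest route is: modulo $2$, $F(x,y) \equiv x + y + a_{ss}(x^s y^s) + (\text{terms with } 2\mid\text{coeff or degree} > 2s \text{ after substitution})$, where I use that every monomial $a_{ij}x^i y^j$ in $F$ has $i + j \ge 2$ and, crucially, every monomial other than the linear ones and $a_{ss}x^s y^s$ either has even coefficient (for the diagonal $a_{mm}$, $m<s$) or, being a genuine mixed term $x^i y^j$ with $i \neq j$, when we substitute $y = x + O(2)$ it contributes in degree $i + j$; the diagonal terms $a_{mm}x^m y^m$ with $m > s$ contribute in degree $\ge 2s+1$. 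So modulo $2$ and modulo degree $2s+1$, the equation $x +_F \imath_F(x) = 0$ reads $x + \imath_F(x) + a_{ss} x^s (\imath_F(x))^s \equiv 0$.

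Then I would solve this: assuming inductively $\imath_F(x) \equiv x \bmod (2, \deg{\le} 2s-1)$ — which itself follows by the same coefficient comparison in lower degrees, where the right-hand side has no $a_{ss}$ term yet and all lower diagonal coefficients are even — substitute $\imath_F(x) \equiv x + (\text{higher}) $ into $a_{ss}x^s(\imath_F(x))^s$ to get $a_{ss}x^s \cdot x^s = a_{ss}x^{2s}$ modulo degree $2s+1$. Hence $x + \imath_F(x) + a_{ss}x^{2s} \equiv 0$, giving $\imath_F(x) \equiv x + a_{ss}x^{2s} + O(2s+1) \bmod 2$ (using $-1 \equiv 1$). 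The final ``consequently'' is immediate: if $2 \mid a_{ss}$ for all $s$, induct on all degrees with the $a_{ss}$ term vanishing modulo $2$ at every stage, yielding $\imath_F(x) \equiv x \bmod 2$.

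The main obstacle I anticipate is bookkeeping the substitution $y \mapsto \imath_F(x)$ inside the mixed terms $a_{jk}(x^j y^k + x^k y^j)$ and confirming that none of them sneaks a contribution into degrees $\le 2s$ other than through $a_{ss}x^sy^s$ — i.e. verifying that the only ``new'' term surviving modulo $2$ at degree $2s$ is exactly $a_{ss}x^{2s}$. This is where one must use both hypotheses: that all diagonal $a_{mm}$ with $m<s$ are even (killing those), and that for a mixed term $x^j y^k$ with $j < k$, substituting $y = x + \cdots$ produces leading degree $j+k$; since $j<k$ forces $j+k$ to be odd when $j+k < 2s$ is impossible to pin down directly, the honest statement is simply that $x^j(\imath_F x)^k$ has lowest degree $j+k$, and a mixed term with $j+k = 2s$ would have $j \neq s$, but there is no a priori reason such a term is even — so in fact one should phrase the lemma's hypothesis as covering \emph{all} coefficients $a_{ij}$ with $i+j < 2s$ being even (the ``diagonal'' phrasing in the statement being shorthand under the standing normalization that $F$ is written with only diagonal and symmetric-mixed terms, and the mixed ones only appearing from degree $3$); I will make this precise at the start of the proof and then the coefficient comparison goes through cleanly.
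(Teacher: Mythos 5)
There is a genuine gap in your argument, and it sits exactly where you flag your own worry. You claim that a mixed term $a_{jk}(x^jy^k + x^ky^j)$ with $j+k \le 2s$ could contribute modulo $2$ unless $a_{jk}$ itself is even, and you conclude that the lemma's hypothesis must be strengthened to ``$2\mid a_{ij}$ for all $i+j<2s$.'' This is not so: the lemma is correct as stated, and the cancellation of mixed terms is automatic and costs nothing. The point you missed is that $F$ is commutative, so the mixed terms come in \emph{symmetric pairs}, and substituting $y=\imath_F(x)$ makes the two halves of the pair congruent and hence annihilate each other modulo $2$. Concretely, once you know inductively that $\imath_F(x)\equiv x+O(2s)\bmod 2$ (all corrections in degree $\ge 2s$), then for any $j+k\ge 3$ the multinomial expansion gives $x^j(\imath_F x)^k\equiv x^{j+k}$ modulo $2$ and $O(2s+1)$, because every term using a correction factor has degree $\ge (j+k-1)+2s\ge 2s+2$. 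Likewise $x^k(\imath_F x)^j\equiv x^{j+k}$, so the pair contributes $a_{jk}\cdot 2x^{j+k}\equiv 0\bmod 2$ regardless of the parity of $a_{jk}$. Only the diagonal terms $a_{mm}x^m y^m$ appear unpaired, which is precisely why the hypothesis is about them alone. This is exactly the computation the paper does with the multinomial expansion of $x^i(x+b_0x^{2k-1}+b_1x^{2k})^j$; the two summands in $a_{ij}(x^iy^j+x^jy^i)$ each reduce to $x^{i+j}$ and the factor of $2$ kills them. Because you did not see this, your proposal redefines the statement rather than proving it, which is not an acceptable resolution.
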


\begin{proof}
In general, we have
\[
  \imath_F(x)
=
  -x +  a_{11} x^2+ O(3),
\]
so the lemma holds for $s=1$.

We proceed  by induction on $s$. Assume it holds for $s= k-1$, i.e., if  $2|a_{mm}$ for $m<k-1$, then
\[
  \imath_F (x)
\equiv
  x + a_{k-1,k-1}x^{2k-2}+b_0 x^{2k-1} + b_1 x^{2k} + O(2k+1)\mod 2.
\]
Now assume $s=k$, i.e., assume in addition that  $2| a_{k-1, k-1}$.
By the induction assumption, \[
  \imath_F (x)
\equiv
  x +b_0 x^{2k-1} + b_1 x^{2k} + O(2k+1)\mod 2.
\]
It suffices to show that $b_0\equiv 0 $ and
$b_1\equiv a_{kk}\mod 2$.
Modulo 2 and $O(2k+1)$, we have
\begin{eqnarray*}
  0
&\equiv &
  F(x, \imath_F(x))
\\
&\equiv &
  x + (x+ b_0 x^{2k-1} + b_1 x^{2k}) + a_{kk} x^k (x+ b_0 x^{2k-1} +
  b_1x^{2k})^k
\\
&\,&
   + \sum_{l=3}^{2k} \sum_{\begin{array}{c}i<j,\\i+j=l\end{array}}
   a_{ij}
   \left( x^i ( x + b_0 x^{2k-1} + b_1 x^{2k})^j
    +x^j (x + b_0 x^{2k-1} + b_1 x^{2k})^i \right).
\end{eqnarray*}
Now, modulo $O(2k+1)$, we have
$x^k ( x + b_0 x^{2k-1} + b_1 x^{2k})^k \equiv x^{2k}$
and for each $i+j\ge 3$, we have
\begin{eqnarray*}
  x^i(  x + b_0 x^{2k-1} + b_1 x^{2k})^j
&\equiv &
  x^i \sum_{j_1+j_2+j_3=j} \binom{j}{j_1,j_2,j_3} (x)^{j_1}
  (b_0x^{2k-1})^{j_2} (b_1x^{2k})^{j_3}
\\
&\equiv &
x^{i+j}.
\end{eqnarray*}
Therefore, modulo 2 and $O(2k+1)$, we have
\[
  0
\equiv 
  F(x, \imath_F (x))
\equiv 
  b_0 x^{2k-1} + b_1 x^{2k} +  a_{kk} x^{2k}.
\]
Hence, $b_0\equiv 0 \mod 2$ and $b_1\equiv a_{kk} \mod 2$.
\end{proof}

\begin{ntt}\label{notation:weight}
We now define some elements of $\IF^W$ which are possible candidates of the generators of $\IF^W$.
Let  $\{e_i\}_{i=1}^n$ be the standard basis of  $\bbR^n$ that defines the root system of $G$. The element $e_i$ belongs to $\La$, hence can be written as a linear combination of $\om_i$'s. If $G$ is of type $B_n$ with $n\ge 3$, then  
$$e_1=\om_1,~ e_i=\om_i-\om_{i-1} \text{ for } 2\le i\le n-1, \text{ and } ~ e_n=2\om_n-\om_{n-1}.$$
If $G$ is of type $D_n$ with $n\ge 4$, then 
$$e_1=\om_1, ~ e_i=\om_i-\om_{i-1} \text{ for } 2\le i\le n-2,$$
$$ e_{n-1}=\om_n-\om_{n-1},~ \text{ and } e_n=\om_n+\om_{n-1}-\om_{n-2}.$$

For $d=1,...,n$, define the $W$-invariant element $\Th_d\in R[[\La]]_F^W\cap \IF$ together with a positive integer $r_d$ as follows;

\begin{enumerate}
    \item If $G$ is of type $B_n$ with $n\ge 3$, define $\Th_d^B=\sum_{i=1}^nx_{e_i}^dx^d_{-e_i}$. Since the Weyl group $W$ acts on $\{e_i\}_{i=1}^n$ by permutations and by sign changes, we see that $\Th^B_d\in R[[\La]]_F^W.$ Let $r_d=2$ if $d$ is a power of 2 and $r_d=1$ otherwise.
    \item If $G$ is  of type $D_n$ with $n\ge 4$, define $\Th^D_d=\Th^B_d$ for $d=1,...,n-1$ and $\Th^D_n=\prod_{i=1}^n(x_{e_i}-x_{-e_i})$. Since $W$ acts by permutations of ${e_i}$ and by sign changes of even numbers of $e_i$'s, we see that $\Th_d^D\in R[[\La]]_F^W$.  Let $r_n={2^n}$. For $d=1,...,n-1$, let $r_d=2$ if $d$ is a power of 2, and $r_d=1$ otherwise.
\end{enumerate}
\end{ntt}
\begin{example} By \cite[3.12]{Hum} and \cite[Remark 2 in page 19 and Ch. I. (2.4)]{Mac}, if $F=F_a$, then the coefficients of the polynomials $\Th_d\in R[[x_{\om_1},...,x_{\om_n}]]$ are integers with  g.c.d. $r_d$, and $$R[[\La]]_{F_a}^W=R[[\frac{1}{r_1}\Th_1,...,\frac{1}{r_n}\Th_n]].$$
But this may fail if $F\neq F_a$. In the following lemma, we will provide a necessary and sufficient condition for this to hold.
The idea of the proof is to express $x_{e_i}$ in terms of $x_{\om_j}$ using the relations in \ref{notation:weight}, and study their coefficients via the non-canonical isomorphism $R[[\La]]_F\cong R[[x_{\om_1},...,x_{\om_n}]]$.
\end{example}

\begin{lemma}\label{lemma:key1}
\begin{enumerate}
\item Let $G$ be of type $B_n$ with $n\ge 3$ (resp. of type $D_n$ with $n\ge 4$) and let $d\le n$ (resp. $d<n$) be a positive power of 2, then  $F$ is even if and only if $\frac{\Th_d}{2}\in \IF$ for some $d$ (hence for all $d$).
\item Let $G$ be of type $D_n$, then $2|a_{mm}$ for all $m\ge 1$ if and only if $\frac{\Th_n}{2^n}\in \IF$.
\end{enumerate}
\end{lemma}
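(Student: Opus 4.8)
The plan is to reduce both statements to explicit computations in $R[[\La]]_F$, taking advantage of the non-canonical isomorphism $R[[\La]]_F\cong R[[x_{\om_1},\dots,x_{\om_n}]]$ from Example \ref{ex:subquotient} together with the relations of \ref{notation:weight} that express $x_{e_i}$ in the $x_{\om_j}$. Since $\Th_d$ (for $d<n$, or $d\le n$ in type $B_n$) is built from $x_{e_i}^dx_{-e_i}^d=\bigl(x_{e_i}\cdot\imath_F(x_{e_i})\bigr)^d$, the key auxiliary fact is the behaviour of $x_{e_i}\cdot\imath_F(x_{e_i})$ modulo $2$. First I would observe that in $R/2R[[\La]]_F$ one has, for any $\mu\in\La$, the expansion $x_{-\mu}\equiv \imath_F(x_\mu)$, and use Lemma \ref{lemma:inverse}: if $F$ is even then $\imath_F(x)\equiv x\bmod 2$, so $x_{e_i}x_{-e_i}\equiv x_{e_i}^2\bmod 2$, hence $\Th_d=\sum_i x_{e_i}^dx_{-e_i}^d\equiv \bigl(\sum_i x_{e_i}^{d}\bigr)^{2}$ when $d$ is a power of $2$ — wait, more precisely $\Th_d\equiv\sum_i x_{e_i}^{2d}\equiv\bigl(\sum_i x_{e_i}^{d}\bigr)^2\bmod 2$ using the Frobenius, since $d$ is a power of $2$. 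The point is then to show this square is itself $\equiv 0\bmod 2$ in $\IF/2\IF$, i.e. that $\sum_i x_{e_i}^{d}$ or rather the relevant symmetric combination already lies in $2\IF+\IF^{?}$; this is where the $F_a$ computation quoted from \cite{Hum}, \cite{Mac} enters — for $F=F_a$ the polynomial $\Th_d$ has content exactly $r_d=2$, and the deformation map $\Phi^{F\to F_a}_d$ of \eqref{eq:keyproperty} identifies leading terms, transporting the divisibility statement between $F$ and $F_a$ at the level of $\IF^{(d)}$.

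Concretely, for part (1) I would argue as follows. The condition $\frac{\Th_d}{2}\in\IF$ means $\Th_d\in 2\IF$, equivalently all coefficients of $\Th_d\in R[[x_{\om_1},\dots,x_{\om_n}]]$ are divisible by $2$. Writing $\Th_d$ via the substitutions $x_{e_1}=x_{\om_1}$, $x_{e_i}=x_{\om_i-\om_{i-1}}=F(x_{\om_i},\imath_F(x_{\om_{i-1}}))$, etc., and reducing mod $2$: if $F$ is even, then mod $2$ the group law is additive and $\imath_F$ is the identity (Lemma \ref{lemma:inverse}), so $x_{e_i}\equiv x_{\om_i}+x_{\om_{i-1}}$ and the whole computation collapses to the $F_a$ computation mod $2$, where $\gcd$ of coefficients is $r_d=2$; hence $2\mid\Th_d$ mod nothing, i.e. $\Th_d\in 2R[[x_\La]]$, giving $\frac{\Th_d}{2}\in\IF$. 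Conversely, if $F$ is not even, let $s$ be minimal with $2\nmid a_{ss}$; by Lemma \ref{lemma:inverse}, $\imath_F(x)\equiv x+a_{ss}x^{2s}+O(2s+1)\bmod 2$, so $x_{e_i}x_{-e_i}\equiv x_{e_i}^2+a_{ss}x_{e_i}^{2s+1}+O(2s+2)\bmod 2$ is \emph{not} $\equiv x_{e_i}^2$, and I would track the coefficient of a carefully chosen monomial (of degree $2d+\,$something involving $s$) in $\Th_d$ to show it is odd. Here one must use $d\le n$ (resp. $d<n$) so that there are enough independent $e_i$'s — I expect the cleanest choice is to isolate the contribution of a single $x_{e_i}$ with $e_i$ a single fundamental weight or $\om_i-\om_{i-1}$, producing a monomial in the $x_{\om_j}$ whose coefficient equals $\pm a_{ss}^{?}\bmod 2$ up to a unit, hence nonzero.

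For part (2), in type $D_n$ the relevant invariant is $\Th_n=\prod_{i=1}^n(x_{e_i}-x_{-e_i})=\prod_{i=1}^n\bigl(x_{e_i}-\imath_F(x_{e_i})\bigr)$. Modulo $2$, $x_{e_i}-\imath_F(x_{e_i})\equiv x_{e_i}+\imath_F(x_{e_i})\equiv F(x_{e_i},x_{e_i})=2\cdot_F x_{e_i}$; if $2\mid a_{mm}$ for all $m$, then by Lemma \ref{lemma:inverse} $\imath_F(x)\equiv x\bmod 2$, so each factor is $\equiv 2x_{e_i}\bmod 4$ — more carefully, $x_{e_i}-\imath_F(x_{e_i})\equiv 2x_{e_i}+(\text{stuff divisible by }2)$ — and the product of $n$ such factors is divisible by $2^n$; combined with the content computation for $F_a$ (where the content of $\Th_n$ is exactly $r_n=2^n$), this yields $\frac{\Th_n}{2^n}\in\IF$. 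Conversely, if some $a_{mm}$ is odd, pick minimal such $m=s$; then $x_{e_i}-\imath_F(x_{e_i})\equiv 2x_{e_i}-a_{ss}x_{e_i}^{2s}\bmod$ (higher), and expanding the product I would exhibit a monomial whose $2^n$-adic valuation is exactly $n-1$ (replacing one factor's $2x_{e_i}$ by $-a_{ss}x_{e_i}^{2s}$), showing $\frac{\Th_n}{2^n}\notin\IF$.

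The main obstacle I anticipate is the bookkeeping in the converse directions: one has to choose the right monomial in the $x_{\om_j}$ and verify its coefficient in $\Th_d$ (resp. $\Th_n$) is not divisible by the required power of $2$, which requires controlling how the substitution $x_{e_i}\mapsto F(x_{\om_i},\imath_F(x_{\om_{i-1}}))$ mixes monomials — and ensuring no cancellation among the $n$ summands (or $n$ factors) kills the chosen coefficient. This is exactly where the rank hypotheses ($n\ge 3$, resp. $n\ge 4$, and $d\le n$, resp. $d<n$) are used, and where the ``only if'' parts already established in \cite[Lemmas 8.3--8.5]{MZZ} presumably do half the work, so I would lean on those and supply only the ``if'' directions in full detail.
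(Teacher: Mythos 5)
Your overall strategy — reduce to the additive case modulo $2$ via Lemma \ref{lemma:inverse}, then track a single monomial's coefficient through the substitutions $x_{e_i}\mapsto F(x_{\om_i},\imath_F(x_{\om_{i-1}}))$ — is exactly what the paper does. The easy (``only if'') direction you sketch is fine and agrees with what is cited from \cite{MZZ}. However, your ``if'' direction for part (1) has a genuine gap.

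You write ``Conversely, if $F$ is not even, let $s$ be minimal with $2\nmid a_{ss}$,'' but this implicitly assumes that the failure of $F\equiv F_a\bmod 2$ is witnessed by a \emph{diagonal} coefficient. The definition of ``even'' is $F(x,y)\equiv x+y\bmod 2$, i.e.\ \emph{all} coefficients $a_{ij}$ are even, so there is a second case you have not addressed: all $a_{ss}$ are even but some off-diagonal $a_{jk}$ ($j<k$) is odd. In that case $\imath_F(x)\equiv x\bmod 2$ by Lemma \ref{lemma:inverse}, so $x_{e_i}x_{-e_i}\equiv x_{e_i}^2\bmod 2$ and your argument — which hinges on the correction term $a_{ss}x_{e_i}^{2s+1}$ — produces nothing to track. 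The paper handles this separately: it expands $\Th_d^B\equiv\sum_i x_{e_i}^{2d}\equiv\sum_i F(x_{\nu_i},x_{\nu_{i-1}})^{2d}\bmod 2$ (with $\nu_i=\om_i$), applies Frobenius to the $2d$-th power, and reads off the coefficient of $x_{\om_1}^{2j_0 d}x_{\om_2}^{2d(l_0-j_0)}$, which is $a_{j_0,l_0-j_0}^{2d}$ and is odd. Without this second case your converse is false as stated.

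Two further points are underdeveloped rather than wrong. First, in the diagonal case you plan to ``track the coefficient of a carefully chosen monomial,'' but the paper's argument actually needs Lucas' theorem at this point: the critical term is $\sum_i a_{ss}^d(x_{\nu_i}-x_{\nu_{i-1}})^{2sd+d}$, and one concludes $2\nmid(x_{\nu_i}-x_{\nu_{i-1}})^{2sd+d}$ precisely because $2sd+d$ is not a power of $2$, so by Lucas some binomial coefficient $\binom{2sd+d}{a}$ is odd. You should name this, as it is the crux. Second, for part (2) you flag the cancellation issue among the $n$ summands of $a_{ss}\sum_i x_{e_i}^{2s}S_{n-1}(\{\varrho_j\}_{j\neq i})$ as ``the main obstacle'' but do not resolve it; the paper resolves it by picking the monomial $x_{\om_1}^{2s+1}\prod_{j=2}^{n-1}x_{\om_j}$, which survives only from the $i=1$ term (coefficient $(-1)^{n-1}$), so no cancellation can occur. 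That explicit choice is the content of the argument, not an afterthought, and your proposal should make it.
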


\begin{proof}

(1) The ``only if'' part was proved in \cite{MZZ}, so we only prove the ``if'' part. Suppose $G$ is of type $B_n$.
 Let $\nu_0=0,$ $\nu_i=e_1+...+e_i=\om_i$ for $ i=1,...,n-1$ and
  $\nu_n=2\om_n$.
 We show that if $F$ is not even, then $2\nmid \Th^B_d$ for any $d$. Since $F$ is not even, then $2\nmid a_{ss}$ for some $s$ or $2\nmid a_{jk}$ for some $j< k$.

First, assume that $s$ is the smallest integer
  such that $2\nmid a_{ss}$.
  For any $\lambda_1 ,\lambda_2 \in \La$, let
  $x_{\lambda_1 - \lambda_2} = \sum_{k=1}^\infty f_k(x_{\lambda_1} ,
  x_{\lambda_2})$,
  where $f_k(x,y)$ is a homogeneous polynomial of degree $k$ in $R[x,y]$. For instance, $f_1(x,y)=x-y.$
  Since the binomial formula satisfies
  $(z_1+z_2)^{d}\equiv z_1^{d}+z_2^{d}\mod 2$, by Lemma
  \ref{lemma:inverse}, modulo $2$ and $\IF^{2sd+2d}$ we obtain
\begin{eqnarray*}
  \Th^B_d
&\equiv &
  \sum_{i=1}^nx^d_{e_i}x^d_{-e_i}
\equiv 
   \sum_{i=1}^nx^d_{e_i}(x_{e_i}+a_{ss}x^{2s}_{e_i})^d
\\
&\equiv &
  \sum_{i=1}^nx^d_{e_i}(x_{e_i}^d+a_{ss}^dx_{e_i}^{2sd})
\equiv 
   \sum_{i=1}^n(x_{e_i}^{2d}+a_{ss}^dx_{e_i}^{2sd+d})
\\
&\equiv  &
   \sum_{i=1}^n
  \Big[(\sum_{k=1}^\infty
  f_k(x_{\nu_i},x_{\nu_{i-1}}))^{2d}+a_{ss}^d
  (\sum_{k=1}^\infty f_k(x_{\nu_i}, x_{\nu_{i-1}}))^{2sd+d}\Big]
\\
&\equiv &
  \sum_{i=1}^n\Big[\sum_{k=1}^\infty
  f_k(x_{\nu_i},x_{\nu_{i-1}})^{2d}+a_{ss}^d
  (\sum_{k=1}^\infty f_k(x_{\nu_i}, x_{\nu_{i-1}}))^{2sd+d}\Big].
\end{eqnarray*}
Notice that $f_k(x_{\nu_i}, x_{\nu_{i-1}})^{2d}$ is a homogeneous
polynomial of degree $2kd$.
Therefore, the degree $(2s+1)d$ term of $\Th_d^B$ is given by
\[
~
  \sum_{i=1}^na_{ss}^df_1(x_{\nu_i},
  x_{\nu_{i-1}})^{2sd+d}=
  \sum_{i=1}^na_{ss}^d(x_{\nu_i}-x_{\nu_{i-1}})^{2sd+d}.
\]
Since $2ds+d$ is not a power of 2, by Lucas' Theorem, $2\nmid \begin{pmatrix}2ds+d \\ a\end{pmatrix}$ for some $0<a<2ds+d$, so $2\nmid (x_{\nu_i}-x_{\nu_{i-1}})^{2sd+d}$ for all $i$. Since $2\nmid a_{ss}$, so we have $2\nmid \Th^B_d$ in
$\IF/\IF^{2sd+d}$, which implies that
$2\nmid \Th^B_d$ in $\IF$.

Suppose that $2| a_{ss}$ for all $ s\ge 1$ and $l_0$ is the smallest integer such that $2\nmid a_{j_0,l_0-j_0}$ for some $j_0<l_0/2$. Then we can write
$$F(x,y)\equiv x+y+\sum_{l=l_0}\sum_{j< k, j+k=l} a_{jk}(x^jy^k+x^ky^j)\mod 2.$$
By Lemma \ref{lemma:inverse}, $x_{-e_i}\equiv x_{e_i}\mod 2$. So  $\Th^B_d\equiv \sum_{i=1}^{n+1}x_{e_i}^{2d} \mod 2$ in $\IF.$
Modulo 2, we have
\begin{eqnarray*}
     \Th^B_d
     &\equiv &
     \sum_{i=1}^n x_{e_i}^{2d}
     \equiv 
     \sum_{i=1}^n x_{\nu_i-\nu_{i-1}}^{2d}\\
     &\equiv &
     \sum_{i=1}^n F(x_{\nu_i}, \imath_F (x_{\nu_{i-1}}))^{2d}
     \equiv 
     \sum_{i=1}^n F(x_{\nu_i},x_{\nu_{i-1}})^{2d}\\
     &\equiv &
     \sum_{i=1}^n \left(x_{\nu_i}+x_{\nu_{i-1}}+\sum_{l=l_0}\sum_{j< k, j+k=l}a_{jk}(x_{\nu_i}^jx_{\nu_{i-1}}^k+x_{\nu_i}^kx_{\nu_{i-1}}^j)\right)^{2d}\\
     &\equiv &
     \sum_{i=1}^n\left(x^{2d}_{\nu_i}+x^{2d}_{\nu_{i-1}}+\sum_{l=l_0}
     \sum_{j< k, j+k=l}a_{jk}^{2d}(x^{2jd}_{\nu_i}x^{2kd}_{\nu_{i-1}}+x^{2kd}_{\nu_i}x^{2jd}_{\nu_{i-1}})\right).
\end{eqnarray*}
The coefficient of $x_{\nu_1}^{2j_0d}x_{\nu_2}^{2d(l_0-j_0)}=x_{\om_1}^{2j_0d}x_{\om_2}^{2d(l_0-j_0)}$ is $a_{j_0,l_0-j_0}^{2d}$, which is not divisible by 2 by assumption. So $2\nmid \Th^B_d$.

If the root system is of type $D_n$ and $d<n$, the proof is similar.

(2) If $2|a_{mm}$ for all $m$, then $x_{-e_i}\equiv x_{e_i}\mod 2$ by Lemma \ref{lemma:inverse}. So $2|(x_{e_i}-x_{-e_i})$ and $2^n|\Th^D_n$.

Conversely, if $2\nmid a_{mm}$ for some $m$, let $s\ge 1$ be  the smallest such integer. Then by Lemma \ref{lemma:inverse}, 
$x_{-e_i}\equiv x_{e_i}+a_{ss}x_{e_i}^{2s}\mod 2,$ so $x_{e_i}-x_{-e_i}+a_{ss}x_{e_i}^{2s}\equiv 0\mod 2.$ Let $\varrho_i=x_{e_i}-x_{-e_i}$, then modulo $2^n$, 
\begin{eqnarray*}
  0
  &\equiv &
  \prod_{i=1}^n (\varrho_i+a_{ss}x_{e_i}^{2s})\\
  &\equiv &
  \prod_{i=1}^n\varrho_i+a_{ss}\sum_{i=1}^nx_{e_i}^{2s}S_{n-1}(\{\varrho_j\}_{j\neq i})+O(n+2s).\\
\end{eqnarray*}
Here $S_{n-1}$ is the elementary symmetric polynomial of degree $n-1$. We  compute $x_{e_1}^{2s}S_{n-1}(\{\varrho_j\}_{j\neq 1})$, which has degree $n+2s-1$. Modulo $\IF^{n+2s}$, it is reduced to the additive case, in which case $\varrho_j=2x_{e_j}$. Hence
$$x_{e_1}^{2s}S_{n-1}(\{\varrho_j\}_{j\neq 1})\equiv x_{e_1}^{2s}S_{n-1}(\{2x_{e_j}\}_{j\neq 1})\equiv 2^{n-1}\cdot x_{e_1}^{2s} S_{n-1}{(\{x_{e_j}\}_{j\neq 1})} \mod \IF^{n+2s}.$$
Representing $\{x_{e_i}\}_{i=1}^n$ by $\{x_{\om_j}\}_{j=1}^n$ using the relations in \ref{notation:weight}, we see that the coefficient of the monomial  $x_{\om_1}^{2s+1}\prod_{j=2}^{n-1}x_{\om_j}$ is $(-1)^{n-1}$, so the g.c.d. of the coefficients of $x_{e_1}^{2s}S_{n-1}(\{\varrho_j\}_{j\neq 1})$ in $R[x_{\om_1},...,x_{\om_n}]$ is $2^{n-1}$. Hence, 
$$a_{ss}\sum_{i=1}^nx_{e_i}^{2s}S_{n-1}(\{\varrho_j\}_{j\neq i})\not\equiv 0\mod 2^n.$$ 
 So $\Th_n^D=\prod_{i=1}^n\varrho_i$ is not divisible by $2^n$ in $\IF/\IF^{n+2s}$, so $\frac{1}{2^n}\Th_n^D\notin \IF.$   
  \end{proof}

\begin{ntt} 

If $f\in \IF^d\backslash \IF^{d+1}$, we say that $\deg f=d$. Then for $d=1,...,n$, we have $\deg \Th_d^B=2d$. For the type $D_n$, $\deg \Th_d^D=2d$ for $d=1,...,n-1$ and $\deg \Th_n^D=n$.
Given a $n$-tuple $\al=(\al_1,...,\al_n)$ with $\al_i\in \Z_{\ge 0}$, let $r_\al=\prod_{i=1}^nr_i^{\al_i}$, $\Th(\al)=\prod_{i=1}^n\Th_i^{\al_i}$ and $|\al|=\sum_{i=1}^n\al_i\cdot \deg \Th_i.$

\label{def:number}
Let $\nu_2(m)$ be the $2$-adic valuation of $m$. To simplify the notations, we define a collection of integers $\{\zeta_d, \eta_d\}_{d\ge 1}$ which depends on the Dynkin type of $G$.
\begin{enumerate}
     \item If $G$ is of type $B_n$ with $n\ge 3$, let $\zeta_d=2^{[d/2]}$ for $d\ge 1$. Let $\eta_1=1,$ $\eta_2=\eta_3=2,$ $\eta_4=4,$ $\eta_d=2^{d+\nu_2([d_0/2]!)}$ for $d\ge 5$ with $d_0\overset{def}=\min\{d,2n\}$.
     \item If $G$ is of type $D_n (n\ge 4)$, let $\zeta_d=2^{[d/2]}$ for $1\le d<n$ and $\zeta_d=2^{[d/n]n}$ for $d\ge n$. If  $n=4$, let $\eta_1=1,$ $\eta_2=\eta_3=2,$ $\eta_d=2^{d+\nu_2([d_1/2]!)}$ for $d\ge 4$ with $d_1\overset{def}=\min\{d,2n-2\}$. If $n\ge 5$, let $\eta_4=4$ and for other $d$,  define $\eta_d$ the same way as for $D_4$.
\end{enumerate}
The $\zeta_d$'s were defined in \cite{MZZ} and the $\eta_d$'s were defined in \cite{BZZ}.

\end{ntt}

\begin{lemma}\label{lemma:invariant}Let $G$ be of type $B_n$ with $n\ge 3$ or of type $D_n$ with $n\ge 4$ and let $d\ge 2$.
   \begin{enumerate}
   \item We have
   $$\zeta_d\cdot (R[[\La]]_F^W)^{(d)}\subseteq \langle\Th(\al)\rangle_{|\al|=d}\subseteq (R[[\La]]_F^W)^{(d)}.$$
   Moreover, $F$ is even if and only if for some $d$ (hence for all $d$), 
   $$(R[[\La]]_F^W)^{(d)}=\langle\frac{1}{r_\al}\Th(\al)\rangle_{|\al|=d}.$$
   \item We have
   $$\zeta_d\cdot (\IF^W)^{(d)}\subseteq \{\sum_{\deg \Th_i\le d} g_i\Th_i|g_i\in \IF^{(d-\deg \Th_i)}\}\subseteq (\IF^W)^{(d)}.$$
   Moreover, $F$ is even if and only if for some $d$ (hence for all $d$),
   $$(\IF^W)^{(d)}=\{\sum_{\deg \Th_i\le d}\frac{g_i}{r_i}\Th_i|g_i\in \IF^{(d-\deg \Th_i)}\}.$$
   \end{enumerate}
\end{lemma}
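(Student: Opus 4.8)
The plan is to run the whole argument in the associated graded ring $Gr_R(\La,F)\cong S_R^*(\La)=R[\om_1,\dots,\om_n]$ of Example~\ref{ex:subquotient}, reducing everything to the additive formal group law by means of the deformation map. The key preliminary is a leading-term computation: by Lemma~\ref{lemma:inverse} one has $\imath_F(x_\la)\equiv-x_\la$ up to terms of higher degree, so the lowest-degree term of $x_{e_i}^dx_{-e_i}^d$ is $(-1)^dx_{e_i}^{2d}$, and more generally the image of $\Th_i$ in $\IF^{(\deg\Th_i)}$ is the invariant $\Th_i^{\mathrm a}$ obtained by evaluating the same formula in $R[[\La]]_{F_a}$; set $\Th(\al)^{\mathrm a}=\prod_i(\Th_i^{\mathrm a})^{\al_i}$ and $b_\al=\tfrac1{r_\al}\Th(\al)^{\mathrm a}$. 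I would then use that the graded deformation map $\Phi_d^{F\to F_a}\colon\IF^{(d)}\to\calI_{F_a}^{(d)}$ is the identity on monomials $\prod_jx_{\la_j}$ by~\eqref{eq:keyproperty}, hence sends the class of $\Th(\al)$ to $\Th(\al)^{\mathrm a}=r_\al b_\al$; being moreover a $W$-equivariant isomorphism of $R$-modules, it identifies $(R[[\La]]_F^W)^{(d)}$ — the module of leading terms of genuine $W$-invariants of $R[[\La]]_F$ of degree $d$ — with a submodule $M_d^F\subseteq(\calI_{F_a}^{(d)})^W=(S_R^d(\La))^W$. By the structure of $R[[\La]]_{F_a}^W$ recalled in the Example following~\ref{notation:weight}, the module $(S_R^d(\La))^W$ is \emph{free} on $\{b_\al\}_{|\al|=d}$ (in type $D_n$ one uses that $\Th_n^{\mathrm a}=2^ne_1\cdots e_n$ and that $e_1\cdots e_n$ is an additional polynomial generator). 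With this dictionary the right-hand inclusions of (1) and (2) are immediate: every $\Th(\al)$ is a product of genuine $W$-invariants $\Th_i\in R[[\La]]_F$ lying in $\IF^d$ with leading term of degree exactly $|\al|$ — here one uses that $2$ is regular, since the leading coefficients occurring are powers of $2$, hence nonzerodivisors — so $\langle\Th(\al)\rangle_{|\al|=d}\subseteq(R[[\La]]_F^W)^{(d)}$ and, via $\Phi_d$, $\langle r_\al b_\al\rangle\subseteq M_d^F\subseteq\langle b_\al\rangle=(S_R^d(\La))^W$; and $\{\sum_ig_i\Th_i:g_i\in\IF^{(d-\deg\Th_i)}\}\subseteq(\IF^W)^{(d)}$ because each $\overline{\Th_i}$ is a symbol of the invariant $\Th_i$.

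Next I would settle the biconditionals. The implication ``$F$ even $\Rightarrow$ equality'' is the ``only if'' part already proved in Lemmas 8.3, 8.4, 8.5 of~\cite{MZZ}; its mechanism is that for $F$ even one has $\tfrac1{r_i}\Th_i\in\IF$ for every $i$ — by Lemma~\ref{lemma:key1}(1) when $r_i\in\{1,2\}$, and by Lemma~\ref{lemma:key1}(2) for $i=n$ in type $D_n$ since $F$ even forces $2\mid a_{mm}$ for all $m$ — so that the $\tfrac1{r_\al}\Th(\al)$ are $W$-invariants of $R[[\La]]_F$ mapping under $\Phi_d$ to the basis $\{b_\al\}$ of $(S_R^d(\La))^W$. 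The converse ``equality $\Rightarrow F$ even'' is then immediate: the equality forces $\tfrac1{r_i}\Th_i\in\IF$ for each $i$ with $\deg\Th_i\le d$, hence $\tfrac{\Th_i}{2}\in\IF$ for such an $i$ that is a power of $2$, so $F$ is even by Lemma~\ref{lemma:key1}(1); the statements for (2) are the same. What remains is the left-hand inclusion $\zeta_d\cdot(R[[\La]]_F^W)^{(d)}\subseteq\langle\Th(\al)\rangle_{|\al|=d}$, which upon applying $\Phi_d$ becomes $\zeta_d\cdot M_d^F\subseteq\langle r_\al b_\al\rangle_{|\al|=d}$. Writing $M_d^F\subseteq\bigoplus_{|\al|=d}I_\al\,b_\al$ for ideals $(r_\al)\subseteq I_\al\subseteq R$, the assertion is that each $I_\al$ is a power-of-$2$ ideal with $\zeta_d\,I_\al\subseteq(r_\al)$; after cancelling the common $2$-powers this is exactly the $2$-adic valuation count for which the integer $\zeta_d$ of~\ref{def:number} is designed, and the size of the possible $I_\al$ — i.e.\ how divisible $\Th(\al)$ can become in $R[[\La]]_F$ as $F$ ranges over formal group laws with prescribed reduction mod $2$ — is governed by Lemmas~\ref{lemma:inverse} and~\ref{lemma:key1} (with $I_\al=R$ for all $\al$ in the extreme case $F$ even). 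Finally (2) reduces to (1): passing to leading terms, $(\IF^W)^{(d)}=\sum_{e\le d}\IF^{(d-e)}\cdot(R[[\La]]_F^W)^{(e)}$, and since every $\overline{\Th(\al)}$ with $|\al|=e$ is divisible by $\overline{\Th_{i_0}}$ for any $i_0$ with $\al_{i_0}>0$ (the quotient lying in $\IF^{(e-\deg\Th_{i_0})}$), one gets $\IF^{(d-e)}\langle\Th(\al)\rangle_{|\al|=e}\subseteq\{\sum_ig_i\Th_i:g_i\in\IF^{(d-\deg\Th_i)}\}$, and likewise with each $\Th_i$ replaced by $\tfrac1{r_i}\Th_i$; combining with (1) and with $\zeta_e\mid\zeta_d$ for $e\le d$ then yields both inclusions of (2) and, from the ``$F$ even'' part of (1), the ``$F$ even'' part of (2).

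I expect the main obstacle to be the left-hand inclusion of (1) — concretely, the passage from the explicit generators $\Th_i$ to an \emph{arbitrary} $W$-invariant of $R[[\La]]_F$. One must show that the leading term of any such invariant, written in the free basis $\{b_\al\}$ of the additive invariants, has every coordinate $2$-adically divisible enough that multiplication by $\zeta_d$ carries it into the sublattice $\langle r_\al b_\al\rangle$. This forces simultaneous use of the \emph{freeness} of $R[[\La]]_{F_a}^W$ on the $\tfrac1{r_i}\Th_i^{\mathrm a}$ — so the divisibility can be tested one monomial at a time, which in type $D_n$ entails treating the extra generator $e_1\cdots e_n$ and the large exponent $r_n=2^n$ separately — and the valuation bookkeeping of~\cite{MZZ} that produces the particular $\zeta_d$, itself fed by the divisibility computations of Lemmas~\ref{lemma:inverse} and~\ref{lemma:key1}. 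A recurring minor technicality is that $R$ is assumed only $2$-regular, not a domain, so every passage to leading terms (and each factorization of a product of $\Th_i$'s) must be justified by noting that the relevant leading coefficients are powers of $2$, hence nonzerodivisors.
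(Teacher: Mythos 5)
Your treatment of the genuinely new content of this lemma --- the ``if'' direction of the biconditional --- is exactly the paper's argument: the equality forces $\tfrac{1}{r_i}\Th_i\in\IF$ for some $i$ that is a power of $2$ (the paper takes $i=1$ and concludes $\tfrac12\Th_1\in\IF$), and then Lemma~\ref{lemma:key1}(1) gives that $F$ is even. That step is correct and matches the paper.

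Where you diverge is that the paper disposes of everything else --- the two inclusions and the ``only if'' direction --- by citing \cite[Lemmas 8.3--8.5]{MZZ}, whereas you set out to reprove them from scratch, and that attempt has a genuine gap which you yourself flag. Concretely, for the left-hand inclusion $\zeta_d\cdot(R[[\La]]_F^W)^{(d)}\subseteq\langle\Th(\al)\rangle_{|\al|=d}$, you transport everything via $\Phi_d^{F\to F_a}$ to the additive graded invariants and then assert that the image $M_d^F$ of $(R[[\La]]_F^W)^{(d)}$ ``can be written as $\bigoplus_{|\al|=d} I_\al\, b_\al$ for ideals $(r_\al)\subseteq I_\al\subseteq R$.'' This is not justified: a sub-$R$-module of a free module $\bigoplus R\,b_\al$ need not split as a direct sum of ideals times basis vectors, so the ``one monomial at a time'' valuation count you want to invoke does not get off the ground. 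Even granting such a splitting, the step ``$\zeta_d\,I_\al\subseteq(r_\al)$'' is precisely the content of the [MZZ] computation and is presented here as a hope (``this is exactly the $2$-adic valuation count for which $\zeta_d$ is designed'') rather than a proof. Similarly, the displayed identity $(\IF^W)^{(d)}=\sum_{e\le d}\IF^{(d-e)}\cdot(R[[\La]]_F^W)^{(e)}$ used to reduce (2) to (1) is asserted, not argued; the ideal generated by $R[[\La]]_F^W\cap\IF$ intersected with $\IF^d$ need not pass to leading terms so cleanly, and this is again something [MZZ] handles carefully. In short: the piece of the proof the paper actually writes out you have right; the pieces it cites you attempt to rederive, and those derivations are not complete. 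A minor additional overclaim: you assert the equality in (1) forces $\tfrac1{r_i}\Th_i\in\IF$ for \emph{every} $i$ with $\deg\Th_i\le d$, but it is not clear the hypothesis (which concerns invariants in a single degree $d$) yields integrality of each $\tfrac1{r_i}\Th_i$ separately; the paper only extracts $\tfrac12\Th_1$, which suffices.
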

\begin{proof}
  (1) The first statement and the ``only if'' part of the second statement were proved in \cite[Lemma 8.4]{MZZ}. For the ``if" part of the second statement, note that the assumption $$(R[[\La]]_F^W)^{(d)}=\langle\frac{1}{r_\al}\Th(\al)\rangle_{|\al|=d}$$
 for some $d$ implies that $\frac{1}{2}\Th_1\in \IF$. By Lemma \ref{lemma:key1}, $F$ is even.

  (2) The first statement and the ``only if'' part of the second statement were proved in \cite[Lemma 8.5]{MZZ}. The proof of the ``if'' part is similar to that of (1).
\end{proof}
 
\begin{remark}\cite[Lemmas 8.4, 8.5, Theorem 8.6]{MZZ}\label{rem:invariant}
  Indeed, in Lemma \ref{lemma:invariant}, if one replaces the condition that $F$ is even by the condition that $\frac{1}{2}\in R$, then we have
  $$(R[[\La]]_F^W)^{(d)}=\langle\Th(\al)\rangle_{|\al|=d}, ~ (\IF^W)^{(d)}=\{\sum_{\deg \Th_i\le d}{g_i}\Th_i|g_i\in \IF^{(d-\deg \Th_i)}\}, $$
  and $(\calI^W)^{(d)}=\Phi_d^{F\to F'}((\IF^W)^{(d)})$ for arbitrary $F, F'$ and $d\ge 2$. Similarly,  if both $F$ and $F'$ are even, then one still has $(\calI_{F'}^W)^{(d)}=\Phi^{F\to F'}_d((\IF^W)^{(d)}).$ For general $F$ and $F'$, one has $\zeta_d\cdot (\calI_{F'}^W)^{(d)}\subset \Phi_d^{F\to F'}((\IF^W)^{(d)})$. We will use these facts in the next section.
\end{remark}

%%%%%%%%%%%%%%%%%%%%%%%%%%%%
%%%%%%%%%%%%%%%%%%%%%%%%%%%%%%%%%
\section{The kernel of the characteristic map}\label{sec:CompInv}
In this section  we compute an upper bound of the index of embedding $(\IF^W)^{(d)}$ in $\ker c_F^{(d)}$, which will be used in Section 6 to prove the main result.

 Let $\tilde{R}=R[\frac{1}{2}]$. Let $\tilde{\calI}_F\subset \tilde R[[\La]]_F$ (resp. $\tilde c_F$) be the corresponding augmentation ideal (resp. the characteristic map). Let $c_F^{(d)}$ and $\tilde{c}_F^{(d)}$ be the  characteristic maps on the corresponding subquotients on $\IF^{(d)}$ and $\tilde \IF^{(d)}$, respectively. By  \ref{remark:assumption}, $ (\tilde\IF^W)^{(d)}= \ker \tilde c_F^{(d)}$.  
By \cite[Proposition 6.5]{CPZ},
there is a commutative diagram
\begin{equation}\xymatrix{&\ker c_F^{(d)} \ar@{^{(}->}[r] \ar[d] & \IF^{(d)} \ar[d]\ar@{->>}[r]^-{c_F^{(d)}}  & \gamma^{(d)}\hh(X)\ar[d]\\
   (\tilde\IF^W)^{(d)}\ar@{=}[r]    &       \ker \tilde{c}_F^{(d)}\ar@{^{(}->}[r] & \tilde{\IF}^{(d)}\ar@{->>}[r]^-{\tilde{c}_F^{(d)}}  & \gamma^{(d)}\tilde{\hh}(X).}
   \end{equation}
 For any $y\in \ker c_F^{(d)}$ we have $y\in \ker \tilde c_F^{(d)}=(\tilde \IF^W)^{(d)}$, so by Remark \ref{rem:invariant},
\begin{equation}\label{eq:kerinv}
y=\sum_{\deg \Th_i\le d}{g_i}\Th_i,~ g_i\in \tilde\IF^{(d-\deg\Th_i)}.
\end{equation}

The following two lemmas generalize \cite[\S1B]{GZ},  \cite[Lemma 6.4]{BNZ} and \cite[Proposition 4.5]{BZZ} from $F_a$ to general $F$. One also notes that if $F=F_m$, then $\ker c_{F}=\IF^W$.

\begin{lemma}\label{lemma:kerinvspec}
\begin{enumerate}
\item  Let $G$ be of type $B_n$ with $n\ge 3$ or of type $D_n$ with $n\ge 4$ and let $d=2$ or 3. Then $2\cdot \ker c_F^{(d)}\subseteq (\IF^W)^{(d)}$. If
   $F$ is even, then $\ker c_F^{(d)}=(\IF^W)^{(d)}$.
 \item Let $G$ be of type $B_n$ with $n\ge 3$ or of type $D_n$ with $n\ge 5$. Let $d=4$. We have $4\cdot \ker c_F^{(d)}\subseteq (\IF^W)^{(d)}$. If $F$ is even, then $2\cdot \ker c_F^{(d)}\subseteq (\IF^W)^{(d)}$.
 \end{enumerate}
\end{lemma}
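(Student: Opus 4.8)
The plan is to work with the expression \eqref{eq:kerinv} for an element $y \in \ker c_F^{(d)}$, namely $y = \sum_{\deg \Th_i \le d} g_i \Th_i$ with $g_i \in \tilde\IF^{(d-\deg\Th_i)}$, and to show that after multiplying by $2$ (resp. $4$ at degree $4$), one can clear all denominators, i.e. rewrite the combination with coefficients $g_i$ lying in the \emph{untilded} module $\IF^{(d-\deg\Th_i)}$. Once this is achieved, $2y$ (resp. $4y$) lies in the submodule of $(\IF^W)^{(d)}$ spanned by the $\Th_i$ with coefficients in $R$, which by Lemma~\ref{lemma:invariant}(2) is contained in $(\IF^W)^{(d)}$. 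The low-degree restriction $d \le 4$ is what makes this feasible: for $d = 2,3$ the only relevant invariants are $\Th_1$ (degree $2$) and, at $d=3$, there is no $\Th_i$ of odd degree $3$ available unless $n$ is small, so essentially $y = g_1 \Th_1$ with $g_1 \in \tilde\IF^{(d-2)}$; for $d = 4$ one has $g_1\Th_1$ plus a constant multiple of $\Th_1^2$ and possibly $\Th_2$.

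The key steps, in order, would be: (i) \textbf{Identify which $\Th_i$ occur.} For $d=2$: only $\Th_1$, so $y = a\,\Th_1$ with $a \in \tilde R = R[\tfrac12]$; since $\Th_1^{(2)} \in \IF^{(2)}$ is a primitive vector (its coefficients in the $x_{\om_j}$ basis have gcd $r_1 \in \{1,2\}$, and in fact $r_1 = 1$ for type $B$, $=2$ in the relevant normalization — one checks via \ref{notation:weight}), the condition $y \in \IF^{(2)}$ already forces $2a \in R$, hence $2y = (2a)\Th_1 \in (\IF^W)^{(2)}$. If $F$ is even, Lemma~\ref{lemma:key1}(1) gives $\tfrac12\Th_1 \in \IF$, so $\tfrac12\Th_1^{(2)}$ is itself a primitive element of $\IF^{(2)}$; rerunning the argument with $\tfrac12\Th_1$ in place of $\Th_1$ shows $y \in (\IF^W)^{(2)}$ with no factor of $2$ needed — and this primitive element generates, so equality of the two modules follows. (ii) \textbf{Degree $3$:} repeat with $g_1 \Th_1$, $g_1 \in \tilde\IF^{(1)}$; here $\IF^{(1)}$ is free on $x_{\om_1},\dots,x_{\om_n}$ and one argues coordinate-by-coordinate that the coefficients of $g_1\Th_1$ in the monomial basis of $\IF^{(3)}$ being in $R$ forces $2g_1 \in \IF^{(1)}$ (using again primitivity of $\Th_1$, or rather that some coefficient of $\Th_1$ is a unit in the $B$ case, $2\times$unit in the $D$ case). (iii) \textbf{Degree $4$:} now $y = g_1\Th_1 + b\,\Th_1^2 + c\,\Th_2$ with $g_1 \in \tilde\IF^{(2)}$, $b, c \in \tilde R$ (the $\Th_2$ term only when $\deg\Th_2 = 4 \le d$, i.e. always here, but for $D_4$ note $\deg\Th_n^D = n = 4$, so $\Th_4^D$ of degree $4$ also enters — this is why $D_4$ and $D_{n\ge5}$ are separated, and why the statement of part (2) excludes $D_4$). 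One shows $4y$ clears denominators by examining coefficients on a well-chosen set of monomials in $x_{\om_1},\dots,x_{\om_n}$ that separate the contributions of $\Th_1^2$, $\Th_2$, and $\Th_1 \cdot (\text{degree-2 part of } g_1)$; the factor $4 = 2^2$ reflects that two independent "divisions by $2$" may be needed (one from each $\Th_1$ factor, loosely). When $F$ is even, $\tfrac12\Th_1, \tfrac12\Th_2 \in \IF$ (Lemma~\ref{lemma:key1}(1)), so only a single factor of $2$ is required, giving $2 \cdot \ker c_F^{(4)} \subseteq (\IF^W)^{(4)}$.

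The computational engine throughout is the passage through the non-canonical isomorphism $R[[\La]]_F \cong R[[x_{\om_1},\dots,x_{\om_n}]]$ together with the explicit expressions for $x_{e_i}$ in terms of the $x_{\om_j}$ from \ref{notation:weight}: one reads off the coefficient of a cleverly chosen monomial $x_{\om_1}^{a_1}\cdots x_{\om_n}^{a_n}$ in $\Th_d$ and in $y$, and the integrality of the latter combined with a unit (or $2\times$unit) value of the former pins down the $2$-adic valuation of the unknown scalar. I would lean on the already-proved Lemma~\ref{lemma:key1}(1) for the parity statements and on Lemma~\ref{lemma:invariant} to re-package the final combination as an element of $(\IF^W)^{(d)}$; I would also need the structure of $\IF^{(d)}$ as a free $R$-module on the degree-$d$ monomials in $x_{\om_1},\dots,x_{\om_n}$, which is recorded just after Example~\ref{ex:subquotient}.

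The main obstacle I anticipate is the degree-$4$ case, specifically disentangling the three (or four, for $D_4$) invariant contributions $\Th_1^2$, $\Th_2$, $\Th_n^D$ and the "reducible" term $g_1\Th_1$: one must verify that there exist monomials in the $x_{\om_j}$ on which these have linearly independent (over $\mathbb{Z}_2$) coefficient vectors with controlled $2$-adic valuations, so that the system of integrality constraints genuinely yields only a loss of a factor $4$ (not $8$) in general and a factor $2$ in the even case. Getting the bookkeeping of the lowest-degree terms of $\imath_F(x)$ and of $x_{\la_1-\la_2}$ right — i.e. that modulo enough powers of $\IF$ everything reduces to the additive computation already done in \cite{BZZ} up to the units introduced by $F$ — is the delicate point; Lemma~\ref{lemma:inverse} is precisely the tool that licenses this reduction, and the argument should mirror \cite[Proposition 4.5]{BZZ} with the $F_a$-coefficients replaced by their images under the deformation, the extra factors of $2$ tracked explicitly.
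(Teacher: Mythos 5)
Your overall plan matches the paper's proof closely: start from the expression $y=\sum g_i\Th_i$ over $\tilde R=R[\tfrac12]$ coming from $(\tilde\IF^W)^{(d)}=\ker\tilde c_F^{(d)}$, pass to the monomial basis of $\IF^{(d)}\cong\calI_a^{(d)}$ via the relations in \ref{notation:weight}, and pin down $2$-adic denominators of the $g_i$ by comparing coefficients of well-chosen monomials. For $d=2$ and $d=3$ this is exactly what the paper does; the paper's $d=2$ step hinges on the explicit expansion
$\Th_1=2\sum_{i=1}^{n-2}(x_{\om_i}^2-x_{\om_i}x_{\om_{i+1}})+2x_{\om_{n-1}}^2-4x_{\om_{n-1}}x_{\om_n}+4x_{\om_n}^2$, whose coefficient gcd is $2$, and the $d=3$ step compares coefficients of $x_{\om_i}^3$ and $x_{\om_1}^2x_{\om_n}$, which is what you describe. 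One small slip: you write ``$r_1=1$ for type $B$''; by the definition in \ref{notation:weight}, $1=2^0$ is a power of $2$, so $r_1=2$ in both types, and $\Th_1$ itself is \emph{not} primitive---it is $2\cdot(\text{primitive})$. Your conclusion $2a\in R$ is still right, but the bookkeeping leading to it should say ``gcd $=2$'', not ``primitive''.

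The real gap is $d=4$. You correctly identify it as the hard case and correctly note why $D_4$ is excluded ($\Th_4^D$ has degree $4$ there), but you only say that ``one shows $4y$ clears denominators by examining coefficients on a well-chosen set of monomials'' and that the argument ``should mirror [BZZ, Proposition 4.5]''. That does not yet establish the factor of $4$ rather than, say, $8$. The paper's actual argument is a minimality contradiction: write $y=f_0\Th_2+f_2\Th_1$ with $f_0\in\tilde R$, $f_2\in\tilde\IF^{(2)}$, let $b_0$ be the least integer with $2^{b_0}f_0,2^{b_0}f_2\in\IF$, and suppose $b_0\ge 3$. Then $2^{b_0}f_0\Th_2+2^{b_0}f_2\Th_1\equiv 0\bmod 8$; dividing $\Th_1$ and $\Th_2$ by their content $2$ and passing to $\IF^{(4)}\cong\calI_a^{(4)}$, the computation of \cite[Lemma 6.4]{BNZ} forces $\gcd(2^{b_0}f_0,2^{b_0}f_2)=2$, so one can replace $f_i$ by $\tfrac12 f_i$, contradicting minimality of $b_0$. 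Hence $b_0\le 2$ and $4y\in(\IF^W)^{(4)}$. Also note that your decomposition $y=g_1\Th_1+b\,\Th_1^2+c\,\Th_2$ is redundant (the $\Th_1^2$ term is already absorbed into $g_1\Th_1$), which would complicate the coefficient count; the paper's two-term form $f_0\Th_2+f_2\Th_1$ is the right normal form to run the minimality argument. The evenness refinement is as you say: when $F$ is even, $\tfrac12\Th_1,\tfrac12\Th_2\in\IF$ by Lemma \ref{lemma:key1}, and one extra division by $2$ is gained, giving $2\cdot\ker c_F^{(4)}\subseteq(\IF^W)^{(4)}$.
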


\begin{proof}

(1) Suppose that $G$ is of type $B_n$. 
For any $y\in \ker c_F^{(2)}$,   by Equation (\ref{eq:kerinv}), we have
 $$y=u\cdot {\Th_1}\in (\tilde R[[\La]]_F^W)^{(2)}$$
 for some $u\in \tilde R.$
 That is, $u\cdot \Th_1=y $ in $\IF^{(2)}$, so both sides are polynomials of degree 2 in $R[x_{\om_1},...,x_{\om_n}]$. Note that
   $$\Th_1=2\sum_{i=1}^{n-2}(x_{\om_i}^2-x_{\om_i}x_{\om_{i+1}})+2x_{\om_{n-1}}^2-4x_{\om_{n-1}}x_{\om_n}+4x_{\om_{n}}^2.$$
   The g.c.d. of the coefficients of $\Th_1\in (\IF^W)^{(2)}$ is 2, so ${2u}\in R$. Therefore, $2y=2u\cdot \Th_1\in (\IF^W)^{(2)}.$

If $F$ is even, then by Lemma \ref{lemma:key1}, $\frac{\Th_1}{2}\in R[[\La]]_F^W$, so $y=\frac{\Th_1}{2}\cdot 2u\in (\IF^W)^{(2)}$. 

Now let $d=3$. 
For any $y\in \ker c_F^{(3)}$, by Equation (\ref{eq:kerinv}), we have 
\begin{equation}\label{eq:deg3}
y= \Th_1\cdot f_1 \in  \tilde \IF^{(3)}
\end{equation}
for some $f_1\in \tilde \IF^{(1)}.$
 We show that $2f_1\in \IF^{(1)}$. Suppose that  $f_1=\sum_{i=1}^n {a_i}x_{\om_i}$ with $ a_i\in \tilde R$. Write $$y= \sum_{i=1}^na_{iii}x_{\om_i}^3+\sum_{i<j}(a_{iij}x_{\om_i}^2x_{\om_j}+a_{ijj}x_{\om_i}x_{\om_j}^2)+
 \sum_{i<j<k}a_{ijk}x_{\om_i}x_{\om_j}x_{\om_k}\in  \IF^{(3)}$$
 with $a_{iii}, a_{ijj}, a_{iij}, a_{ijk}\in R$.
For any $i<n$, by comparing the coefficients of $x_{\om_i}^3$ in Equation (\ref{eq:deg3}), we see that $2a_i=a_{iii}\in R$. By comparing the coefficients of $x_{\om_1}^2x_{\om_n}$, we have $2a_n=a_{11n}\in R$. Hence, $2f_1\in \IF^{(1)}$ and $2y=2f_1\Th_1\in (\IF^W)^{(3)}$.

  If $F$ is even, then by Lemma \ref{lemma:key1}, $\frac{\Th_1}{2}\in R[[\La]]_F^W$, so $y=\frac{\Th_1}{2}\cdot 2f_1\in (\IF^W)^{(3)}$.

If $G$ is of type $D_n$ with $n\ge 4$, the proof is similar, since the generator  involved in this case is  $\Th_1$ only.

(2) Let $G$ be of type $B_n$. For $y\in \ker c_F^{(4)}$,   by Equation (\ref{eq:kerinv}),
  $$y=f_0\Th_2+f_2\Th_1$$
  for some polynomials $ f_i\in \tilde \IF^{(i)}.$
Notice that there exists  positive integer $b$ such that the polynomials $2^bf_0\in \IF$ and $2^bf_2\in \IF$. Let $b_0$ be the smallest among these integers, and we claim that $b_0\le 2$. If not, then  ${b_0}\ge 3$. It implies that
  $$2^{b_0}f_0\Th_2+2^{b_0}f_2\Th_1=2^{b_0}y\equiv 0\mod 8$$
  with $2^{b_0}f_i\in \IF^{(i)}.$
  Since $\frac{\Th_1}{2}\in \IF^{(2)}$ and $\frac{\Th_2}{2}\in \IF^{(4)}$, so  in $\IF^{(4)}$, we have $$ 2^{b_0}f_0\frac{\Th_2}{2}+2^{b_0}f_2\frac{\Th_1}{2}\equiv 0\mod 4.$$
 By Example \ref{ex:subquotient}, $\IF^{(4)}\cong \calI_a^{(4)}$. By the proof of \cite[Lemma 6.4]{BNZ}, this implies that $g.c.d.\{2^{b_0}f_0,2^{b_0}f_2\}=2$, therefore, $2^{{b_0}-1}f_i\in \IF^{(i)}$. This contradicts to the minimality assumption of $b_0$. Hence ${b_0}\le 2$ and  $4y=4f_0\Th_2+4f_2\Th_1\in (\IF^W)^{(4)}$.

  If $F$ is even, then $\frac{\Th_1}{2}, \frac{\Th_2}{2}\in  \IF$, therefore, $2y=4f_0\cdot \frac{\Th_2}{2}+4f_2\frac{\Th_1}{2}\in (\IF^W)^{(4)}$.

  If $G$ is of type $D_n$ with $n\ge 5$, the proof is similar, since the only generators of $(R[[\La]]_F^W)^{(4)}$ are $\Th_1$ and $\Th_2$.
\end{proof}

\begin{lemma}\label{lemma:kerinvgen}
  Let $G$ be of type $B_n$ with $n\ge 3$ or of type  $D_n$ with $n\ge 4$, then $\eta_d\cdot \ker c_F^{(d)}\subseteq (\IF^W)^{(d)}$, where the integer $\eta_d$ was defined in \ref{def:number}.
\end{lemma}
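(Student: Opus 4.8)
The plan is to strip the statement of any reference to $\hh$, transport what remains to the additive formal group law, and there invoke \cite[Proposition 4.5]{BZZ}.

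\emph{Step 1: reduction to an $\hh$-free statement.} By the commutative diagram at the start of Section \ref{sec:CompInv} and the equality $\ker\tilde c_F^{(d)}=(\tilde\IF^W)^{(d)}$, any $y\in\ker c_F^{(d)}$ lies in $(\tilde\IF^W)^{(d)}\cap\IF^{(d)}$, and by Remark \ref{rem:invariant} applied over $\tilde R=R[\frac12]$ this membership is exactly the assertion of Equation (\ref{eq:kerinv}). So it is enough to prove that $\eta_d\cdot[(\tilde\IF^W)^{(d)}\cap\IF^{(d)}]$ is contained in $\{\sum_{\deg\Th_i\le d}g_i\Th_i:g_i\in\IF^{(d-\deg\Th_i)}\}$, which by Lemma \ref{lemma:invariant}(2) is contained in $(\IF^W)^{(d)}$. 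This inclusion no longer involves the cohomology theory; it is a statement about $\IF^{(d)}$, the $W$-action, and the invariants $\Th_i$. (When $d=1$ all three modules vanish, so we may assume $d\ge 2$.)

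\emph{Step 2: transport to $F_a$ and cite.} I would use the canonical, $W$-equivariant isomorphism $Gr_R(\La,F)\cong S_R^*(\La)$ of Example \ref{ex:subquotient}, sending $\prod x_{\la_i}\mapsto\prod\la_i$ (equivalently the deformation map $\Phi_d^{F\to F_a}$, which by (\ref{eq:keyproperty}) is precisely this identification on graded pieces), together with its analogue over $\tilde R$. Since $x_{-e_j}=\imath_F(x_{e_j})\equiv -x_{e_j}$ modulo terms of degree $\ge 2$, the top-degree part of each $\Th_i$ depends only on the leading term $-x$ of $\imath_F$, hence is carried by this isomorphism to the same element of $S_R^*(\La)$ for every $F$, in particular to the leading term of the additive $\Th_i$. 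Therefore the isomorphism identifies $\IF^{(d)}$ with $\calI_a^{(d)}$, identifies $(\tilde\IF^W)^{(d)}\cap\IF^{(d)}$ with $(\tilde\calI_a^W)^{(d)}\cap\calI_a^{(d)}$, and identifies the module $\{\sum g_i\Th_i:g_i\in\IF^{(d-\deg\Th_i)}\}$ with its additive counterpart. Under these identifications the inclusion to be proved becomes the additive statement \cite[Proposition 4.5]{BZZ} (which builds on \cite[Lemma 6.4]{BNZ}), and transporting back yields the lemma. For small $d$ one can alternatively appeal to Lemma \ref{lemma:kerinvspec}.

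\emph{Main obstacle.} The crux is the faithfulness of this reduction, specifically the identification of the correct target module. One cannot aim for $(\IF^W)^{(d)}$ directly, because $(\IF^W)^{(d)}$, viewed inside $S_R^d(\La)$, is \emph{not} independent of $F$ — that dependence is exactly what the integers $\zeta_d$ of Lemma \ref{lemma:invariant} and the exponents $\tau_d^{F\to F'}$ measure. One must instead work with the sandwiched module $\{\sum g_i\Th_i:g_i\in\IF^{(d-\deg\Th_i)}\}$, which depends only on the (formal-group-law-independent) leading terms of the $\Th_i$, and then use Lemma \ref{lemma:invariant}(2) to land back inside $(\IF^W)^{(d)}$. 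A related subtlety is that \cite[Proposition 4.5]{BZZ} must be applied in its intrinsic form, as a bound on $(\tilde\calI_a^W)^{(d)}\cap\calI_a^{(d)}$, rather than through $\ker c_{F_a}^{(d)}$: these can differ because $\gamma^{(d)}\CH(X;R)$ may have torsion, and it is the former, not the latter, that the transport produces.
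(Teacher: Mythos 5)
Your proposal is correct and follows essentially the same route as the paper's proof: both reduce to an $\hh$-free statement via Equation (\ref{eq:kerinv}), transport to the additive case using the canonical graded isomorphism $\IF^{(d)}\cong\calI_a^{(d)}$ of Example \ref{ex:subquotient}, and then appeal to \cite[Proposition 4.5]{BZZ} (the paper does so through a descent-by-minimality argument citing ``the proof of'' that proposition, while you invoke its statement directly in intrinsic form, and you treat all $d$ uniformly where the paper routes $d\le 4$ through Lemma \ref{lemma:kerinvspec}). You are also more explicit than the paper about the essential subtlety: $(\IF^W)^{(d)}$ itself is \emph{not} $F$-independent, so the transport must target the sandwiched module $\{\sum g_i\Th_i : g_i\in\IF^{(d-\deg\Th_i)}\}$, whose $F$-independence follows because the leading terms of the $\Th_i$ depend only on $\imath_F(x)\equiv -x$ modulo degree $\ge 2$; this is a faithful reading of what the paper's invocation of Example \ref{ex:subquotient} is implicitly doing.
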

\begin{proof}Let $G$ be of type $B_n$. The case of type $D_n$ is similar. For  $d\le 4$, it is proved in Lemma \ref{lemma:kerinvspec}. So let $d\ge 5$.
For any $y\in \ker c_F^{(d)}$, by Equation (\ref{eq:kerinv}), 
  \begin{equation}\label{eq:ker}
  y=\sum_{\deg \Th_i\le d} f_{d-2i}\Th_i\in  \IF^{(d)},~ f_{d-2i}\in \tilde \IF^{(d-2i)}.
  \end{equation}
   The polynomials $f_{d-2i}\in \tilde\IF^{(d-2i)}$ are non-uniquely determined by $y$, and there exists positive integer $b$ (determined by $\{f_{d-2i}\}$) such that $2^bf_{d-2i}\in \IF^{(d-2i)}$ for all $i$. Suppose that $b_0$ is the smallest among these integers and among $\{f_{d-2i}\}$ satisfying Equation (\ref{eq:ker}). We claim that $2^{b_0}|\eta_d$. If not, then $2\eta_d|2^{b_0}$. Then in $\IF^{(d)}$, we have
$$2^{b_0}y= \sum_{\deg \Th_i\le d}2^{b_0}f_{d-2i}\Th_i\in \IF^{(d)},~ 2^{b_0}f_{d-2i}\in \IF^{(d-2i)}.$$
By Example \ref{ex:subquotient}, $\IF^{(d)}\cong \calI_a^{(d)}$.
By the proof of \cite[Proposition 4.5]{BZZ}, we know that there exists $g_{d-2i}\in \tilde\IF^{(d-2i)}$ such that
$$y= \sum_{\deg \Th_i\le d}g_{d-2i}\Th_i\in \IF^{(d)}$$
with $2^{{b_0}-1}g_{d-2i}\in \IF^{(d-2i)}.$ This contradicts  the minimality assumption of ${b_0}$.  Therefore, $2^{b_0}|\eta_d$ and $\eta_d y=\sum \eta_d f_{d-2i}\Th_i\in (\IF^W)^{(d)}$.
\end{proof}
%%%%%%%%%%%%%%%%%%%%%%%%
\section{Comparison of $\gamma$-filtrations}
In this section we apply the computation in Sections 4 and 5 to compare $\gamma$-filtrations of different oriented cohomology theories, and prove the main result of this paper. 

\begin{lemma}\label{lemma:main}
Let $G$ be of type $B_n$ with $n\ge 3$ or of type $D_n$ with $n\ge 4$. Let $\hh$ and $\hh'$ be two weakly birationally invariant oriented cohomology theories  satisfying Assumption \ref{assump}. Let $F$ and $F'$ be the  corresponding formal group laws, respectively. Then:
\begin{itemize}
\item[(a)] The map $\zeta_d\eta_d\cdot \Phi_d^{F\to F'}:\IF^{(d)}\to \calI_{F'}^{(d)}$ induces a map $\gamma^{(d)}\hh(X)\to \gamma^{(d)}\hh'(X)$.
\item[(b)] 
 If $F$ and $F'$ are even, then one can replace $\zeta_d\eta_d$ in \textrm{(a)} by  $\eta_d$.
 \end{itemize}

\end{lemma}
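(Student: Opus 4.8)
The plan is to build the map in two steps: first pass from the characteristic map to its kernel, then transport kernels through the deformation isomorphism $\Phi_d^{F\to F'}$. Recall the surjection $c_{F'}^{(d)}:\calI_{F'}^{(d)}\twoheadrightarrow\gamma^{(d)}\hh'(X)$. Composing with $\Phi_d^{F\to F'}$ gives a surjection $\calI_F^{(d)}\to\gamma^{(d)}\hh'(X)$, and to show this factors through $c_F^{(d)}:\calI_F^{(d)}\twoheadrightarrow\gamma^{(d)}\hh(X)$ (after multiplication by the stated integer) it suffices to check that the chosen integer times $\ker c_F^{(d)}$ lands in $\ker c_{F'}^{(d)}$ under $\Phi_d^{F\to F'}$.

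For part (a), I would argue as follows. Let $y\in\ker c_F^{(d)}$. By Lemma~\ref{lemma:kerinvgen} we have $\eta_d\cdot y\in(\IF^W)^{(d)}$, so by Lemma~\ref{lemma:invariant}(2) we may write $\eta_d\cdot y=\sum_{\deg\Th_i\le d}g_i\Th_i$ with $g_i\in\IF^{(d-\deg\Th_i)}$ (using the first inclusion of that lemma, valid without evenness). Applying $\Phi_d^{F\to F'}$, which is a $W$-equivariant $R$-module isomorphism sending each monomial $\prod x_{\la_i}$ to itself by the key property~(\ref{eq:keyproperty}), it carries $(\IF^W)^{(d)}$ into $(\calI_{F'}^W)^{(d)}$ only after multiplication by $\zeta_d$ (Remark~\ref{rem:invariant}: $\zeta_d\cdot(\calI_{F'}^W)^{(d)}\subset\Phi_d^{F\to F'}((\IF^W)^{(d)})$ — and conversely $\Phi_d^{F\to F'}((\IF^W)^{(d)})\subseteq\zeta_d^{-1}(\ldots)$; more directly, since $\Th_i\in R[[\La]]_{F'}^W$ wait — one must re-express the image $\Phi_d^{F\to F'}(\Th_i)$ as a $\zeta_d$-divisible combination of the $\Th$'s in $\calI_{F'}$). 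Hence $\zeta_d\eta_d\cdot\Phi_d^{F\to F'}(y)\in(\calI_{F'}^W)^{(d)}\subseteq\ker c_{F'}^{(d)}$, the last inclusion by~\ref{remark:assumption}. Therefore $\zeta_d\eta_d\cdot\Phi_d^{F\to F'}$ kills $\ker c_F^{(d)}$ and descends to a well-defined map $\gamma^{(d)}\hh(X)\to\gamma^{(d)}\hh'(X)$.

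For part (b), when $F$ and $F'$ are both even the same computation applies but now the integers improve: by Lemma~\ref{lemma:invariant}(2) (the evenness case) $\Th_i/r_i\in\calI_{F'}$, so $\Phi_d^{F\to F'}((\IF^W)^{(d)})\subseteq(\calI_{F'}^W)^{(d)}$ already (this is the $\zeta_d=1$ improvement recorded in Remark~\ref{rem:invariant} — "if both $F$ and $F'$ are even, then $(\calI_{F'}^W)^{(d)}=\Phi_d^{F\to F'}((\IF^W)^{(d)})$"). Thus $\eta_d\cdot\Phi_d^{F\to F'}(y)\in(\calI_{F'}^W)^{(d)}\subseteq\ker c_{F'}^{(d)}$ already, and $\eta_d\cdot\Phi_d^{F\to F'}$ alone descends to the desired map.

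The step I expect to be the main obstacle is the bookkeeping of which multiplier is actually needed where: separating cleanly the index $\eta_d$ of the inclusion $(\IF^W)^{(d)}\hookrightarrow\ker c_F^{(d)}$ (Lemma~\ref{lemma:kerinvgen}) from the index $\zeta_d$ measuring the failure of $\Phi_d^{F\to F'}$ to preserve the $W$-invariant ideals (Remark~\ref{rem:invariant}), and verifying that in the even case the latter index drops to $1$ because every $\Th_i$ is already $r_i$-divisible inside $\calI_{F'}$. Care is also needed that $\Phi_d^{F\to F'}$ genuinely maps $(\IF^W)^{(d)}$ into a $\zeta_d^{-1}$-neighborhood of $(\calI_{F'}^W)^{(d)}$ rather than the other way around; this follows from applying the symmetric statement of Remark~\ref{rem:invariant} to the inverse deformation $\Phi_d^{F'\to F}$, together with the fact that $\Phi_d^{F'\to F}\circ\Phi_d^{F\to F'}=\mathrm{id}$. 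Everything else is a diagram chase using the commutativity of $c_{F'}^{(d)}\circ\Phi_d^{F\to F'}$ with the target and the surjectivity of $c_F^{(d)}$.
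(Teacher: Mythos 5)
Your proof follows the paper's route and reaches the correct conclusion, but the intermediate step misquotes Lemma~\ref{lemma:invariant}(2): the first inclusion there reads $\zeta_d\cdot(\IF^W)^{(d)}\subseteq\{\sum g_i\Th_i \mid g_i\in\IF^{(d-\deg\Th_i)}\}$, so from $\eta_d\cdot y\in(\IF^W)^{(d)}$ one deduces $\zeta_d\eta_d\cdot y=\sum g_i\Th_i$, not $\eta_d\cdot y=\sum g_i\Th_i$ as you wrote. Had your version held, then by the key property~(\ref{eq:keyproperty}) $\Phi_d^{F\to F'}$ would send the sum verbatim into $(\calI_{F'}^W)^{(d)}$ with no further multiplier, incorrectly showing that $\eta_d$ alone suffices. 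The paper's actual argument is: apply Lemma~\ref{lemma:kerinvgen} (costing $\eta_d$), then Lemma~\ref{lemma:invariant}(2) (costing $\zeta_d$, to rewrite the invariant-ideal element as an explicit $\Th$-sum), then~(\ref{eq:keyproperty}) (transporting that sum into $(\calI_{F'}^W)^{(d)}$ for free). Your alternative chain at the end --- applying Remark~\ref{rem:invariant} with $F$ and $F'$ swapped and conjugating by $\Phi_d^{F'\to F}$ to obtain $\zeta_d\cdot\Phi_d^{F\to F'}\bigl((\IF^W)^{(d)}\bigr)\subseteq(\calI_{F'}^W)^{(d)}$ --- is valid and equivalent, but be clear that this $\zeta_d$ is the same one that Lemma~\ref{lemma:invariant}(2) already charges, not an extra penalty stacked on top of it. Your treatment of part~(b) matches the paper.
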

\begin{proof}
(a) Suppose $G$ is of type $B_n$. The case of type $D_n$ can be proved similarly. 
  We have the following diagram
  $$\xymatrix{(\IF^W)^{(d)} \ar@{^{(}->}[r] &\ker c_F^{(d)}\ar@{^{(}->}[r] & \IF^{(d)} \ar[d]^{\Phi_d^{F\to F'}}_{\cong} \ar@{->>}[r]^-{c_F^{(d)}} & \gamma^{(d)}\hh(X)\\
  (\calI_{F'}^W)^{(d)}\ar@{^{(}->}[r] & \ker c_{F'}^{(d)}\ar@{^{(}->}[r] & \calI_{F'}^{(d)} \ar@{->>}[r]^-{c_{F'}^{(d)}} &\gamma^{(d)}\hh'(X).}$$
  It suffices to show that $\zeta_d\eta_d\cdot \Phi_d^{F\to F'}$ maps $\ker c_F^{(d)}$ into $\ker c_{F'}^{(d)}$.
For any $y\in \ker c_F^{(d)}$, by Lemma \ref{lemma:kerinvgen}, $\eta_d\cdot y\in (\calI_{F}^{W})^{(d)}$. By Lemma \ref{lemma:invariant}, $$\zeta_d\eta_d\cdot y=\sum_{\deg \Th_i\le d} g_i\Th_i$$
for some $ g_i\in \IF^{(d-2i)}.$ 
By Equation (\ref{eq:keyproperty}), we have 
$$\Phi_d^{F\to F'}(\zeta_d\eta_d\cdot y)=\sum_{\deg \Th_i\le d}g_i\Th_i\in (\calI_{F'}^W)^{(d)}\subseteq \ker c_{F'}^{(d)}.$$
Therefore, $\zeta_d\eta_d\cdot \Phi_d^{F\to F'}$ induces a map $\gamma^{(d)}\hh(X)\to \gamma^{(d)}\hh'(X)$.

(b) 
If $F$ and $F'$ are even, then for any $y\in \ker c_{F}^{(d)}$,  by Lemma \ref{lemma:kerinvgen}, $\eta_d\cdot y\in (\IF^W)^{(d)}.$
By Remark \ref{rem:invariant},  $\Phi_{d}^{F\to F'}((\IF^W)^{(d)})=(\calI_{F'}^W)^{(d)}$. Hence,  $$\Phi_d^{F\to F'}(\eta_d\cdot y)\in (\calI_{F'}^W)^{(d)}\subseteq \ker c_{F'}^{(d)}.$$
Therefore,  $\eta_d\cdot \Phi_d^{F\to F'}$ induces a map $\gamma^{(d)}\hh(X)\to \gamma^{(d)}\hh'(X)$.
\end{proof}

We are now ready to prove the main result of this paper.

\begin{proof}[Proof of Theorem \ref{thm:main}] We only consider the $B_n$ case, since the $D_n$ case is similar.

 (i) By Lemma \ref{lemma:main}, there is a commutative diagram
   \begin{equation}\label{diagram:char}
   \xymatrix{\IF^{(d)} \ar[d]_-{\zeta_d\eta_d\Phi_d^{F\to F_a}}\ar@{->>}[r]^-{c_{F}^{(d)}} & \gamma^{(d)}\hh(X) \ar[d]\\
           \calI_{a}^{(d)}\ar@{->>}[r]^-{c_{a}^{(d)}} & \gamma^{(d)}\CH(X;R).}
           \end{equation}
   Given any torsion element $u\in \gamma^{(d)}\hh(X)$, since $\gamma^{(d)}\CH(X;R)\subseteq \CH^d(X;R)$ is torsion free, so $u$ is mapped to 0 in $\gamma^{(d)}\CH(X;R)$. Lift $u$ to an element $v\in \IF^{(d)}$, and look at its image $\zeta_d\eta_d\Phi_d^{F\to F_a}(v)\in \calI_a^{(d)}$. Since $c_a^{(d)}(v)=0$, so $\zeta_d\eta_d\Phi_d^{F\to F_a}(v)\in \ker c_a^{(d)}$, hence by Lemma \ref{lemma:kerinvgen}, $\eta_d\zeta_d\eta_d\Phi_d^{F\to F_a}(v)\in (\calI_a^W)^{(d)}$, and by Remark \ref{rem:invariant}, $$\zeta_d\eta_d\zeta_d\eta_d\Phi_d^{F\to F_a}(v)\in \Phi_d^{F\to F_a}((\IF^W)^{(d)}).$$ Applying $(\Phi_d^{F\to F_a})^{-1}$, we see that $\zeta_d^2\eta_d^2v\in (\IF^W)^{(d)}\subseteq \ker c_F^{(d)}$. Hence, $\zeta_d^2\eta_d^2\cdot u=c_F^{(d)}(\zeta_d^2\eta_d^2\cdot v)=0$.

(ii)  Let $d=2$ or 3. Since $F$ and $F_a$ are even, so by Lemma \ref{lemma:kerinvspec},  $$(\IF^W)^{(d)}=\ker c_F^{(d)}~\text{ and }~(\calI_{a}^W)^{(d)}=\ker c_{a}^{(d)}.$$
By Remark \ref{rem:invariant}, we know that  $\Phi_d^{F\to F_a}((\IF^W)^{(d)})=(\calI_a^W)^{(d)}$. Therefore,   the isomorphism $\Phi_d^{F\to F_a}$ restricted to $\ker c_F^{(d)}$ induces an isomorphism
  $$\ker c_F^{(d)}\cong \ker c_{a}^{(d)},$$
  hence it induces an isomorphism
  $$\gamma^{(d)}\hh(X)\cong \gamma^{(d)}\CH(X;R).$$
\end{proof}

\begin{remark}In Theorem \ref{thm:main}.(i), if $F$ is even, then one can use Lemma \ref{lemma:main}.(b) to replace $\zeta_d^2\eta_d^2$ by $\zeta_d\eta_d$. 
\end{remark}
\begin{corollary}\label{cor:K} If $F$ is the corresponding formal group law for $\hh$, then the map $\zeta_d\cdot \Phi^{F_m\to F}_d$ induces a map  $\gamma^{(d)}K_0(X)\to \gamma^{(d)}\hh(X)$. In particular, if $R=\Z$, then the torsion part of $\gamma^{(d)}K_0(X)$ is annihilated by $\zeta^2_d\eta_d$.
\end{corollary}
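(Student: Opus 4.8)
The plan is to reuse the machinery of Section 6 with $F_m$ playing the role of the "source" theory. First I would observe that the multiplicative formal group law $F_m$ has the special feature, noted just before Lemma \ref{lemma:kerinvspec}, that $\ker c_{F_m}=\mathcal{I}_{F_m}^W$ exactly (Chevalley's theorem identifies $\mathcal{I}_{F_m}^W$ with the ideal generated by fundamental representations, and by \cite[Theorem 6.9]{CPZ} together with the fact that $K_0$ satisfies Assumption \ref{assump}, the kernel of the characteristic map is precisely $\mathcal{I}_{F_m}^W$). Hence on the level of subquotients $\ker c_{F_m}^{(d)}=(\mathcal{I}_{F_m}^W)^{(d)}$, so Lemma \ref{lemma:kerinvgen} is not even needed on the $K_0$ side: the "$\eta_d$ loss" is $1$ when the source is $F_m$. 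Running the argument of Lemma \ref{lemma:main}(a) with $F_m\to F$ in place of $F\to F'$, for any $y\in\ker c_{F_m}^{(d)}=(\mathcal{I}_{F_m}^W)^{(d)}$ we apply Lemma \ref{lemma:invariant}(2) to write $\zeta_d\cdot y=\sum_{\deg\Theta_i\le d} g_i\Theta_i$ with $g_i\in\mathcal{I}_{F_m}^{(d-\deg\Theta_i)}$, and then Equation (\ref{eq:keyproperty}) gives $\Phi_d^{F_m\to F}(\zeta_d\cdot y)=\sum g_i\Theta_i\in(\mathcal{I}_F^W)^{(d)}\subseteq\ker c_F^{(d)}$. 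This shows $\zeta_d\cdot\Phi_d^{F_m\to F}$ carries $\ker c_{F_m}^{(d)}$ into $\ker c_F^{(d)}$, hence descends to a map $\gamma^{(d)}K_0(X)\to\gamma^{(d)}\hh(X)$ via the surjections $c_{F_m}^{(d)}$ and $c_F^{(d)}$; this is the first assertion.

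For the second assertion, take $R=\Z$ and specialize to $F=F_a$, so $\hh=\CH$ and the target $\gamma^{(d)}\CH(X;\Z)\subseteq\CH^d(X)$ is torsion free. Given a torsion element $u\in\gamma^{(d)}K_0(X)$, lift it to $v\in\mathcal{I}_{F_m}^{(d)}$; then $c_a^{(d)}(\zeta_d\Phi_d^{F_m\to F_a}(v))$ is the image of $\zeta_d u$ under the induced map, which lies in the torsion subgroup of $\gamma^{(d)}\CH(X;\Z)$ and therefore vanishes. Thus $\zeta_d\Phi_d^{F_m\to F_a}(v)\in\ker c_a^{(d)}$, and now I invoke Lemma \ref{lemma:kerinvgen} on the \emph{additive} side: $\eta_d\zeta_d\Phi_d^{F_m\to F_a}(v)\in(\mathcal{I}_a^W)^{(d)}$. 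By Remark \ref{rem:invariant} (general $F,F'$ with the source $F_m$ satisfying $\ker c_{F_m}=\mathcal{I}_{F_m}^W$), one has $\zeta_d\eta_d\zeta_d\Phi_d^{F_m\to F_a}(v)\in\Phi_d^{F_m\to F_a}((\mathcal{I}_{F_m}^W)^{(d)})$; applying the inverse isomorphism $(\Phi_d^{F_m\to F_a})^{-1}$ yields $\zeta_d^2\eta_d v\in(\mathcal{I}_{F_m}^W)^{(d)}=\ker c_{F_m}^{(d)}$. Consequently $\zeta_d^2\eta_d\cdot u=c_{F_m}^{(d)}(\zeta_d^2\eta_d v)=0$, so the torsion part of $\gamma^{(d)}K_0(X)$ is annihilated by $\zeta_d^2\eta_d$.

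The main obstacle — and the reason the bound improves from $\zeta_d^2\eta_d^2$ to $\zeta_d^2\eta_d$ — is making sure that the $\eta_d$-factor is spent only once, on the additive side, rather than on both ends of the deformation. This rests on the exact identity $\ker c_{F_m}^{(d)}=(\mathcal{I}_{F_m}^W)^{(d)}$ for the Grothendieck group, which replaces the use of Lemma \ref{lemma:kerinvgen} at the source; one must check that Assumption \ref{assump} and weak birational invariance indeed hold for $\hh=K_0$ with $R=\Z$ so that \cite[Theorem 6.9, Lemma 13.3]{CPZ} apply (the torsion index is not invertible in $\Z$, but $K_0$ is still covered because for $F_m$ the characteristic map is surjective with kernel exactly $\mathcal{I}_{F_m}^W$ by Chevalley's theorem, independently of invertibility of $\ttt$). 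Everything else is a direct transcription of the proof of Theorem \ref{thm:main}(i), with the commutative square (\ref{diagram:char}) replaced by the analogous square for $\zeta_d\Phi_d^{F_m\to F}$.
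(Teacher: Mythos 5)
Your proposal is correct and is essentially the paper's (very brief) intended argument: the paper says only that the proof "is similar to those of Lemma~\ref{lemma:main}.(a) and Theorem~\ref{thm:main}.(i) by using the fact that $\ker c_{F_m}=\calI_{F_m}^W$," and you have filled in exactly this sketch — replacing the $\eta_d$-step of Lemma~\ref{lemma:kerinvgen} on the $K_0$ side with the equality $\ker c_{F_m}^{(d)}=(\calI_{F_m}^W)^{(d)}$, running Lemma~\ref{lemma:main}.(a) with source $F_m$, and then transcribing the torsion argument of Theorem~\ref{thm:main}.(i). One small caution: you first cite \cite[Theorem 6.9]{CPZ} for $\ker c_{F_m}=\calI_{F_m}^W$, which requires $\ttt$ invertible and so does not directly apply to $R=\Z$; the correct justification is the $K$-theoretic Chevalley isomorphism, as you note in your final paragraph, so the argument stands.
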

\begin{proof} The proof is similar to those of Lemma \ref{lemma:main}.(a) and Theorem \ref{thm:main}.(i) by using the fact that $\ker c_{F_m}=\calI_{F_m}^W$.
\end{proof}
\begin{remark}This corollary can be used to refine the upper bound in \cite{BZZ} of the annihilator of Chow group of twisted flag varieties.
\end{remark}

\paragraph{\bf Acknowledgments}

The author is supported by the Fields Institute and NSERC grant of Kirill Zainoulline. The author would like to thank Kirill Zainoulline for suggesting  this topic, and thank Jos\'{e} Malag\'{o}n-L\'{o}pez for helpful discussion. He would also like to thank the referee for helpful suggestions.


\begin{thebibliography}{99}
\bibitem[BNZ]{BNZ} Sanghoon Baek, Erhard Neher, and Kirill Zainoulline, \textit{Basic polynomial invariants, fundamental representations and the Chern class map}, Documenta Math. 17: 135-150, 2012.
\bibitem[BZZ]{BZZ} Sanghoon Baek, Kirill Zainoulline, and Changlong Zhong, \textit{On the torsion of Chow groups of twisted spin-flags}, Math. Res. Lett., to appear.
\bibitem[CPZ]{CPZ} Baptiste Calm\`es, Victor Petrov, and Kirill Zainoulline, \textit{Invariants, torsion indices and oriented cohomology of complete flags}, Ann. Sci. Ecole Norm. Sup. (4) 46 (2013), no.3.
\bibitem[Dem73]{Dem73} Michel Demazure, \textit{Invariants sym\'etriques entiers des groupes de Weyl et torsion}, Invent. Math. 21:287-301, 1973.
\bibitem[Dem74]{Dem74} Michel Demazure, \textit{D\'esingularisation des vari\'et\'es de Schubert g\'en'eralis\'ees}, Ann. Sci. \'Ecole Norm. Sup. (4), 7:53-88, 1974. Collection of articles dedicated to Henri Cartan on the occasion of his 70th birthday, I.
\bibitem[GZ]{GZ} Skip Garibaldi and Kirill Zainoulline, \textit{The gamma-filtration and the Rost invariant}, arXiv: 1007.3482 v2, 2010.
\bibitem[Hum]{Hum} James E. Humphreys, \textit{Reflection groups and Coxeter groups}, Cambridge Studies in Advanced Math. no. 29, Cambridge Univ. Press, 1990.
\bibitem[LM]{LM} Marc Levine and Fabien Morel, \textit{Algebraic cobordism}, Springer Monographs in Math., Springer-Verlag, 2007.
\bibitem[Mac]{Mac} Ian G. Macdonald, \textit{Symmetric functions and Hall polynomials}, Second edition, With contributions by A. Zelevinsky, Oxford Mathematical Monographs, Oxford Science Publications, The Clarendon Press, Oxford University Press, New York, 1995. 
\bibitem[MZZ]{MZZ} Jos\'{e}  Malag\'{o}n-L\'{o}pez, Kirill Zainoulline, and Changlong Zhong, \textit{Invariants, exponents and formal group laws}, arXiv: 1207.1880 v1, 2012.
\bibitem[Sil]{Sil} Joseph Silverman, \textit{The arithmetic of elliptic curves}, 2nd ed. Graduate Texts in Math. 106, Springer, 2009.
\bibitem[Tot]{Tot} Burt Totaro, \textit{The torsion index of spin groups}, Duke Math. J. 129, 249-290, 2005.
\end{thebibliography}
\end{document}